\newtheorem{theorem}{Theorem}
\newtheorem{definition}[theorem]{Definition}
\newtheorem{lemma}[theorem]{Lemma}
\newtheorem{proposition}[theorem]{Proposition}
\newtheorem*{conjecture*}{Conjecture}
\theoremstyle{remark}
\newtheorem{remark}[theorem]{Remark}
\newtheorem{example}[theorem]{Example}
 \newcommand{\eps}{\varepsilon}
 \renewcommand{\phi}{\varphi}
\newcommand{\E}{\mathbb{E}}
\renewcommand{\P}{\mathbb{P}}
\newcommand{\C}{\mathcal{C}}
\newcommand{\R}{\mathbb{R}}
\newcommand{\cl}{\textsc{cl}}
\newcommand{\op}{\textsc{op}}
\newcommand{\G}{\mathcal G}
\newcommand{\cE}{\mathcal{E}}
\renewcommand{\eps}{\varepsilon}
\renewcommand{\epsilon}{\varepsilon}
\newcommand{\fourIdx}[5]{%
\setbox1=\hbox{\ensuremath{^{#1}}}%
 \setbox2=\hbox{\ensuremath{_{#2}}}%
 \setbox5=\hbox{\ensuremath{#5}}%
 \hspace{\ifnum\wd1>\wd2\wd1\else\wd2\fi}%
 \ensuremath{\copy5^{\hspace{-\wd1}\hspace{-\wd5}#1\hspace{\wd5}#3}%
 _{\hspace{-\wd2}\hspace{-\wd5}#2\hspace{\wd5}#4}%
 }}
\numberwithin{equation}{section}
\numberwithin{theorem}{section}
\newcommand{\SG}{\mathsf{SG}}
\renewcommand{\subset}{\subseteq}
\newcommand{\SSG}{{\mathsf{SG}_2}}
\newcommand{\mg}{\mathsf{MCpl}}
\renewcommand{\llcorner}{\upharpoonright}
\begin{document}

\title[Transformations of MOT and SEP]{Monotonicity Preserving Transformations of MOT and SEP}

\author[M. Huesmann]{Martin Huesmann} 
\email{huesmann@iam.uni-bonn.de}

\author[F. Stebegg]{Florian Stebegg}
\email{florian.stebegg@columbia.edu}
\thanks{We thank Mathias Beiglb\"ock and Alex Cox for numerous valuable discussions. MH gratefully acknowledges support by the CRC1060 and the Hausdorff Center for Mathematics.}

\date{\today}
\begin{abstract}
Recently, \cite{BeJu16, BeNuTo16} established that optimizers to the martingale optimal transport problem (MOT) are concentrated on $c$-monotone sets. In this article we characterize monotonicity preserving transformations revealing certain symmetries between optimizers of MOT for different cost functions. Due to the intimate connection of MOT and the Skorokhod embedding problem (SEP) these transformations are also monotonicity preserving and disclose symmetries for certain solutions to the optimal SEP. 
Furthermore, the SEP picture allows to easily understand the geometry of these transformations once we have established the SEP counterparts to the known solutions of MOT based on the monotonicity principle for SEP which in turn allows to directly read off the structure of the MOT optimizers.

\medskip

\noindent\emph{Keywords:} Optimal transport, martingale optimal transport, Skorokhod embedding, change of numeraire.
\end{abstract}

\maketitle
\section{Introduction}

Given probabilities $\mu$ and $\nu$ on $\R$ and a \emph{cost function} $c:\R\times\R\to\R$ the martingale optimal transport problem (MOT) is to minimize
$$\int c(x,y)\ dQ(x,y)$$
among all \emph{martingale couplings} $Q$ of $\mu$ and $\nu$, i.e.\ among all couplings satisfying $\int y\ dQ_x(y)=x$ for $\mu$-a.e.\ $x$, where $(Q_x)_x$ denotes a disintegration of $Q$ with respect to $\mu$. The articles \cite{BeJu16, BeNuTo16} showed that similar to the classical optimal transport problem optimizers are characterized by a local optimality condition in that every optimizer is concentrated on a $c$-monotone set $\Xi$. If $c$ is sufficiently nice, also the reverse holds. Every martingale coupling concentrated on $\Xi$ is optimal between its marginals \cite{BeJu16, Gr16}. This indicates that in order to understand the optimizers of MOT it is necessary to understand the geometry of $c$-monotone sets.

One way of understanding this geometry is to understand the rigidity of its characterizing properties under transformations. On the other hand, transformations of martingales play a prominent role in modern stochastic. The aim of this article is to understand the rigidity of c-monotone sets as well as the stability of the martingale property under so called \emph{monotonicity preserving transformations.} As a byproduct these transformations reveal various new symmetries between solutions to MOT for different cost functions.

More precisely, given intervals $I,J,I',J'$ each bijective map $T:I\times J\to I'\times J'$ together with a positive function $h$ induces a transformation $\tau$ of measures on $I\times J$ to measures on $I'\times J'$ by setting
$$\int g(x',y') d\tau(\pi)(x',y'):=\int g\circ T(x,y) h(x,y) d\pi(x,y)$$
and a transformation of functions $c$ on $I\times J$ to functions $c'$ on $I'\times J'$ by setting
$$c'(x',y'):=(c/h)\circ T^{-1}(x',y')\ .$$

The purpose of $T$ is to move mass from a point $(x,y)$ to a point $(x',y')$, whereas $h$ rescales the mass. Because $h$ is
taken to be strictly positive, the structure of the 'support' of a measure $\pi$ is only transformed via $T$.

Roughly speaking, the pair $(T,h)$ is a monotonicity preserving transformation if it maps  a $c$-monotone set $\Xi$ into a $c'$-monotone set $\Xi'$ (for a precise definition we refer to Section \ref{sec:trafo}).
It turns out that there are only very few such transformations:

\begin{theorem}\label{thm:montrafo}
 Let $T(x,y) = (s(x,y),t(x,y)):I\times J\to I'\times J'$ be bijective and $h: I \times J \to (0,\infty)$. Assume that $(T,h)$ is a monotonicity preserving transformation.
Then $s$ does not depend on $y$ and $t$ and $h$ do not depend on $x$.
\begin{enumerate}[(1)]
\item either $h$ is constant and $t$ is affine in $y$;
\item or $h$ is affine in $y$ ($h(y) = c(y - b)$) and $t$ is of the form $t(y) = a/(y-b)$.
\end{enumerate}
\end{theorem}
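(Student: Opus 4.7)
The plan is to extract rigid structural constraints on $T=(s,t)$ and $h$ by successively imposing monotonicity preservation against carefully chosen configurations, and then to solve the resulting functional equation for $(s,t,h)$.

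First I would establish \emph{fiber preservation}: that $s$ does not depend on $y$, and that $t,h$ do not depend on $x$. The MOT $c$-monotonicity is characterized by mean-preserving competitor swaps within a common $x$-fiber: any two points $(x,y_1),(x,y_2)$ of a $c$-monotone set $\Xi$ together with a competitor pair $(x,\tilde y_1),(x,\tilde y_2)$ with $\tilde y_1+\tilde y_2=y_1+y_2$ give a local cost inequality. For $T(\Xi)$ to be $c'$-monotone, an analogous competitor inequality must be available on the image, which forces the images $(s(x,y_i),t(x,y_i))$ to share a common $x'$-fiber; varying $y_1,y_2$ then forces $s(x,y_1)=s(x,y_2)$, so $s$ is independent of $y$. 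Applying the same argument to the inverse transformation $(T^{-1},1/h)$, which is again monotonicity preserving (now between $c'$ and $c$) and acts on $x'$-fibers, gives the dual conclusion that $t$ and $h$ do not depend on $x$. Thus we may write $T(x,y)=(s(x),t(y))$ and $h=h(y)$.

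Next I would translate this fiber preservation into a functional equation through the martingale constraint. The transformation acts on each $x$-fiber by pushforward under $t$ weighted by $h$, and sends the $x$-fiber to the $s(x)$-fiber. Because the mean-preserving swap structure underpinning $c$-monotonicity is exactly the martingale property on each fiber, the image $\tau(\pi)$ of a martingale coupling is again a martingale coupling; its martingale constraint unfolds, for every mean-$x$ probability $\pi_x$ on $J$, into
\[
\int t(y)\,h(y)\,d\pi_x(y)\;=\;s(x)\int h(y)\,d\pi_x(y).
\]
The only functions of $y$ whose integral against every mean-$x$ probability vanishes are scalar multiples of $y-x$ (test against Diracs and two-point distributions), so
\begin{equation}\label{eq:plan:fe}
\bigl(t(y)-s(x)\bigr)\,h(y)\;=\;c(x)\,(y-x)
\end{equation}
for some function $c$ of $x$ alone. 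Differentiating \eqref{eq:plan:fe} in $x$ yields $-s'(x)h(y)=c'(x)(y-x)-c(x)$, and a further differentiation in $y$ gives $s'(x)h'(y)=-c'(x)$. Since one side depends only on $x$ and the other only on $y$, both equal a common constant $-\lambda$, forcing $h(y)=\mu-\lambda y$ and $c(x)=\lambda s(x)+\nu$. Substituting back produces the ODE $s'(x)(\mu-\lambda x)=\lambda s(x)+\nu$. If $\lambda=0$, then $h$ is constant, $s$ is affine, and rearranging \eqref{eq:plan:fe} shows that $t$ is affine in $y$ (Case (1)). If $\lambda\neq 0$, the ODE integrates to $s(x)=\alpha+a/(x-b)$ with $b=\mu/\lambda$, and substitution into \eqref{eq:plan:fe} forces $h(y)=-\lambda(y-b)$ and $t(y)=\alpha+a/(y-b)$, matching Case (2) up to the additive constant $\alpha$ which is absorbed into the target interval.

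The principal obstacle is the fiber preservation step: translating the formal definition of MPT into the assertion that $s(x,\cdot)$ is constant requires a careful accounting of which competitor configurations actually sit inside a given $c$-monotone set and how the weight $h$ deforms them under $T$. Once fiber preservation is in hand, the second step (derivation of \eqref{eq:plan:fe}) is a short computation and the ODE analysis is elementary, yielding precisely the dichotomy stated in the theorem.
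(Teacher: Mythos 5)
Your argument breaks down at the second step, and the error is a substantive one that the paper itself explicitly flags. You assert that ``the image $\tau(\pi)$ of a martingale coupling is again a martingale coupling'' and from this derive the functional equation $\bigl(t(y)-s(x)\bigr)h(y)=c(x)(y-x)$. But a monotonicity preserving transformation is \emph{not} required to preserve the martingale property --- the paper states this in so many words immediately after the theorem (``a monotonicity preserving transformation does not need to (and in general will not) preserve the marginals or the martingale property''). Competitor preservation only asks that $\tau^{-1}$ send competitors to competitors; it does not ask that measures with barycenter at $x$ be sent to measures with barycenter at $s(x)$. A concrete counterexample to your claim: take $s(x)=x^3$, $t(y)=2y+1$, $h\equiv 1$. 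This is competitor preserving (so falls under Case (1) of the theorem), but applying it to a martingale coupling produces a coupling whose conditional mean at $x'=s(x)$ is $2x+1\neq x^3$, so the martingale property is destroyed. Consequently your equation over-constrains the problem and forces $s$ to be affine (Case (1)) or a Möbius map tied to $t$ (Case (2)), whereas the actual theorem places \emph{no} restriction on $s$ beyond bijectivity and $y$-independence. What you have in fact derived is essentially Theorem \ref{thm:change of numeraire}, the characterization of the strictly smaller class of monotonicity preserving transformations that \emph{additionally} preserve martingales; indeed your equation $(t(y)-s(x))h(y)=c(x)(y-x)$ is exactly the paper's $(t(y)-s(x))h(y)=r(x)(y-x)$ in the ``Transformations of Martingales'' subsection.

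Your first step is also built on a mischaracterization of competitors. You describe competitor swaps as occurring ``within a common $x$-fiber'' with a pair $(x,\tilde y_1),(x,\tilde y_2)$ satisfying only $\tilde y_1+\tilde y_2=y_1+y_2$. But competitors must share both marginals, so a competitor supported on a single $x$-fiber with the same $y$-marginal and the same conditional mean is necessarily identical to the original measure --- there are no nontrivial single-fiber competitors. The nontrivial competitors, as in the paper's Lemma \ref{lem:compchar}, necessarily move mass across at least two $x$-fibers (e.g.\ $\alpha=\lambda\delta_{(x_1,y_1)}+(1-\lambda)\delta_{(x_1,y_2)}+\delta_{(x_2,y_\lambda)}$ versus $\beta$ with $x_1\leftrightarrow x_2$). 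Your conclusion that $s$ is $y$-independent is correct, but the argument you sketch to reach it does not work; the paper needs the chain Lemma \ref{lem:compBlindInjective} $\to$ Proposition \ref{prop:invcomprep}, which analyzes conditions $(C1)$--$(C3)$ for the inverse transformation directly. To repair the proof you would need to replace the martingale-preservation step with the correct constraint coming from competitor preservation, namely that $(g\circ\tilde s)\tilde h$, $(g\circ\tilde t)\tilde h$, and $(g\circ\tilde s)(\tilde t-\tilde s)\tilde h$ are competitorblind for every bounded continuous $g$, which only forces $\tilde h$ and $\tilde t\cdot\tilde h$ to be affine in $y'$, with no coupling between $s$ and $t$.
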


We stress that a monotonicity preserving transformation does not need to (and in general will not) preserve the marginals or the martingale property. However, if we additionally require the transformation to preserve the martingale property, we get a new characterization of a well known transformation, the change of numeraire transformation which has been studied for MOT in \cite{CaLaMa16}.

\begin{theorem}\label{thm:change of numeraire}
Let $T:I\times J\to I'\times J'$ be  bijective and $h: I \times J \to (0,\infty)$. Assume that $(T,h)$ is a monotonicity preserving transformation. Then $T$ preserves martingales if and only if it is of the following form,
\begin{enumerate}[(1)]
\item either $T(x,y)=(ax+b,ay+b), h\equiv 1$;
\item or  $T(x,y)=(a/(x-b),a/(y-b))$ and $h(y)=c(y-b)$.
\end{enumerate}
The second transformation is only well-defined for $I,J \subseteq (b,\infty)$
and for a probability measure $\pi$, we only have that $\tau(\pi)$ is a probability measure if $c^{-1} + b = \int x \,d\pi_1 = \int y \,d\pi_2$.
\end{theorem}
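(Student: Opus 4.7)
The plan is to combine the structural result of Theorem \ref{thm:montrafo} with the martingale condition imposed on the transformed measure. Theorem \ref{thm:montrafo} already allows me to write $T(x,y)=(s(x),t(y))$ and $h=h(y)$, with $(t,h)$ falling into one of the two normal forms listed there. Disintegrating any martingale coupling $\pi$ as $\pi=\mu\otimes(\pi_x)_{x\in I}$ with $\int y\,d\pi_x(y)=x$, the push-forward $\tau(\pi)=T_{*}(h\cdot\pi)$ has first marginal $s_{*}(H\cdot\mu)$, where $H(x):=\int h(y)\,d\pi_x(y)$, and its conditional at $x'=s(x)$ equals $\tfrac{1}{H(x)}\,t_{*}(h\cdot\pi_x)$. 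Thus the martingale identity $\int y'\,d(\tau\pi)_{s(x)}(y')=s(x)$ is equivalent to
\begin{equation}\label{eq:mart-cond}
s(x)\,H(x)=\int t(y)\,h(y)\,d\pi_x(y), \qquad \mu\text{-a.e. } x,
\end{equation}
and since $\mu$ and the conditionals $\pi_x$ can be chosen freely subject to $\int y\,d\pi_x(y)=x$, this identity must in fact hold for every $x\in I$.

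Next I would plug the two normal forms into \eqref{eq:mart-cond}. In case (1) of Theorem \ref{thm:montrafo}, $h$ is a positive constant and $t(y)=\alpha y+\beta$, so \eqref{eq:mart-cond} collapses to $s(x)=\alpha x+\beta$, yielding $T(x,y)=(ax+b,ay+b)$; rescaling fixes $h\equiv 1$ so that $\tau$ preserves total mass. In case (2), $h(y)=c(y-b)$ and $t(y)=a/(y-b)$, so $H(x)=c(x-b)$ and $\int t(y)h(y)\,d\pi_x(y)=ac$ (a constant, independent of the choice of $\pi_x$), forcing $s(x)=a/(x-b)$. This reproduces exactly the two transformations listed in the statement.

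For the converse direction I would simply run the computation above in reverse, directly verifying that both pairs $(T,h)$ satisfy \eqref{eq:mart-cond} for every martingale coupling $\pi$. The probability-measure normalization is treated separately: in case (1) it pins $h$ down to $1$, while in case (2) the total mass $\int c(y-b)\,d\pi(x,y)=c\bigl(\int y\,d\pi_2-b\bigr)$ combined with the martingale identity $\int y\,d\pi_2=\int x\,d\pi_1$ yields the compatibility condition $c^{-1}+b=\int x\,d\pi_1=\int y\,d\pi_2$ recorded at the end of the theorem. I do not expect a serious obstacle: the whole argument is bookkeeping of disintegrations layered on top of Theorem \ref{thm:montrafo}, the only mild subtlety being that \eqref{eq:mart-cond} must be enforced pointwise in $x$, which is achieved by taking $\mu$ to be a point mass at an arbitrary $x\in I$ with $\pi_x$ any probability on $J$ with barycentre $x$.
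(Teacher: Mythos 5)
Your proposal is correct and takes essentially the same route as the paper's (very terse) argument: starting from Theorem~\ref{thm:montrafo} to reduce to $T(x,y)=(s(x),t(y))$ and $h=h(y)$ with $(t,h)$ in one of the two normal forms, then imposing the martingale-preservation identity to pin down $s$ and the normalization of $h$. The paper phrases the martingale condition as the pointwise functional equation $(t(y)-s(x))h(y)=r(x)(y-x)$ and invokes a ``simple analysis,'' while you keep the disintegration integral identity $s(x)H(x)=\int t(y)h(y)\,d\pi_x(y)$ and substitute the two normal forms directly; these are equivalent and both rest on Theorem~\ref{thm:montrafo}. Your derivation in fact supplies exactly the details the paper elides, and the normalization bookkeeping (forcing $h\equiv 1$ in case (1) and $c^{-1}+b=\int x\,d\pi_1=\int y\,d\pi_2$ in case (2)) matches the statement.
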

Further examples of monotonicity preserving transformations are the mirror transformations in \cite[Remark 5.2]{HeTo13}. We emphasize that---using the transformation $T(x,y)=(-x,y), h\equiv 1$---our results also reveal certain symmetries between optimizers of $|x-y|$ and those of $|x+y|$.

We note that the assumption on the domain of the map $T$ in Theorem \ref{thm:montrafo} is not very restrictive since this is precisely the shape of the domain of an irreducible component of the pair $(\mu,\nu)$ as shown in \cite[Theorem 8.4]{BeJu16}.
\medskip\\
Already in one of the first articles on MOT \cite{HoNe12} the intimate connection of MOT and the Skorokhod embedding problem (SEP) was used to construct the optimizer for the cost function $c(x,y)=-|x-y|$.  The relation between MOT and SEP was used in \cite{BeHeTo15}
to give a short proof of the uniqueness of left monotone martingale couplings.
The reason for this fruitful connection is simple: Any discrete time martingale can be interpolated to a continuous time martingale 
\cite{He77} and, hence, by the Dambis-Dubins-Schwarz theorem the optimal SEP can be seen as a continuous time version of MOT.

Recently, \cite{BeCoHu16} established that solutions of the \emph{optimal SEP}, i.e.\ solutions of SEP optimizing a given cost functional $\gamma$, are concentrated on $\gamma$-monotone sets $\Gamma$. Understanding these sets for different $\gamma$ allows to construct various solutions to the optimal  SEP including all known solutions as special cases. Viewing the optimal SEP as a continuous time version of MOT the transformations characterized in Theorem \ref{thm:montrafo} induce transformations of the $\gamma$-monotone sets for the optimal SEP for the specific class of functionals $\gamma$ of the form $\gamma(g,t)=\gamma(g(0),g(t))$ for $g\in C[0,t]$ and, thus, Theorem \ref{thm:montrafo} discloses symmetries between different solutions of SEP.

Furthermore, the understanding of the geometry of the $\gamma$-monotone sets for the SEP counterparts to the MOT problems allows us to give a simple derivation of the known solutions to MOT. From the specific structure of the SEP solutions one can directly read off the defining properties of the optimal martingale couplings. Building on this, it is straightforward to deduce the optimizers for the transformed MOT problem, i.e.\ for the MOT with respect to the cost function $c'=(c/h)\circ T^{-1}$. 

It is desirable to have a result similar to Theorem \ref{thm:montrafo} also for the general optimal SEP. However, it seems that a necessary ingredient is still missing, a 'full monotonicity principle'. 
We give a conjecture on this principle and observe that the conjecture holds for the special case of cost functions $\gamma$ considered in this article due to the connection of SEP and MOT.

\subsection{Related literature} The MOT was introduced in  \cite{BeHePe12} where also a duality result was established for lower semicontinuous cost functions. The duality result was extended in  \cite{BeNuTo16} to Borel measurable cost functions building on a deep understanding of MOT developed in \cite{BeJu16} which develops a theory parallel to classical optimal transport. Additionally, \cite{BeJu16} construct the optimizer for cost functions of the form $c(x,y)=h(y-x)$ with $h'''>0$ ($h'''<0$ resp.), the so called left (right resp.) monotone couplings. It was shown in \cite{HeTo13} that the left (right resp.) monotone couplings are also optimizers for the cost functions $c$ satisfying the generalized Spence--Mirrlees condition $c_{xyy}<0$ ($c_{xyy}>0$ resp.). Previously, \cite{HoNe12, HoKl15} constructed the optimizers for the cost functions $c(x,y)=\pm|x-y|$ not satisfying the Spence--Mirrlees condition.

In higher dimension, first  results have been established by \cite{GhKiLi15, Li16}. The duality result was extended in various directions to continuous time, e.g. \cite{GaHeTo14, DoSo12, DoSo15, HoOb15, BeCoHuPePr15, GuTaTo16} and references therein.

\subsection{Outline}
In Section \ref{sec:prel} we recall some facts on martingale optimal transport as well as the monotonicity principle of \cite{BeCoHu16} that will allow us to give a simple geometric explanation of the structure of all known solutions to MOT in Section \ref{sec:mot}. In Section \ref{sec:trafo} we prove Theorems \ref{thm:montrafo} and \ref{thm:change of numeraire} and show that their geometric consequences can be easily unraveled using the SEP picture of martingale optimal transport. We end by stating a conjecture on a full monotonicity principle for Skorokhod embedding in Section \ref{sec:conj}.

\section{Preliminaries}\label{sec:prel}

\subsection{Martingale Optimal Transport}\label{ssec:mot}
Let $\mu,\nu\in\mathcal P^1(\R)$ (where $\mathcal P^1(X)$ denotes the set of all probability measures on a space $X$ with finite first moments). Let $\mathsf{Cpl}(\mu,\nu)$ be the set of all couplings of $\mu$ and $\nu$, i.e.\ measures on $\R^2$ with marginals $\mu$ and $\nu$, and let $\mg(\mu,\nu)\subset\mathsf{Cpl}(\mu,\nu)$ be the subset of all martingale couplings, i.e.\ all couplings under which the coordinate process $(X,Y)$ becomes a martingale, where $X(x,y)=x$ and $Y(x,y)=y$. By Strassen's Theorem $\mg(\mu,\nu)\neq\emptyset$ iff $\mu$ and $\nu$ are increasing in convex order, i.e.\ $\int \phi\ d\mu\leq\int \phi\ d\nu$ for all convex functions $\phi$. A measure $Q\in \mathsf{Cpl}(\mu,\nu)$ is a martingale coupling iff for any disintegration $(Q_x)_x$ of $Q$ with respect to $\mu$ it holds that $\mu$-a.s.
$$\int y \ Q_x(dy)=x\ .$$
Let $c:\R\times \R\to\R$ be some \textit{cost function}. Then, the martingale optimal transport problem is to solve
\begin{equation}\label{eq:MOT}
P^{mg}_{\mu,\nu}(c)=\inf_{Q\in\mg(\mu,\nu)} \int c(x,y)\ Q(dx,dy)\ .\tag{\sf{MOT}}
\end{equation}
It is not hard to see  that $\mg(\mu,\nu)$ is compact with respect to to the weak topology and, hence, \eqref{eq:MOT} admits a minimizer if $c$ is lower semicontinuous and bounded from below since then the functional
$$ Q\mapsto \int c(x,y)\ Q(dx,dy)$$
is lower semicontinuous by the Portmanteau theorem.  Beiglb\"ock, Nutz and Touzi showed in \cite{BeNuTo16} the following geometric characterization of optimizers to \eqref{eq:MOT}.
\begin{theorem}[{\cite[Corollary 7.8]{BeNuTo16}}]\label{thm:fullcm}
 Assume $P^{mg}_{\mu,\nu}(c)<\infty$. Then there exists a Borel set $\Xi\subset \R^2$ with the following properties.
\begin{enumerate}
\item A measure $Q\in\mg(\mu,\nu)$ is concentrated on $\Xi$ iff it is optimal for $P^{mg}_{\mu,\nu}(c)$.
\item Let $\bar\mu,\bar\nu$ be probabilities increasing in convex order. If $Q\in\mg(\bar\mu,\bar\nu)$ is concentrated on $\Xi$, then $Q$ is optimal for $P^{mg}_{\bar\mu,\bar\nu}(c).$
\end{enumerate}
\end{theorem}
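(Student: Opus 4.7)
The plan is to derive Theorem \ref{thm:fullcm} from a strong duality theorem for \eqref{eq:MOT}. First I would invoke the MOT duality result: there exist Borel functions $\varphi,\psi:\R\to\R$ and $h:\R\to\R$ with
\[
\varphi(x)+\psi(y)+h(x)(y-x)\le c(x,y)\qquad\text{for all }(x,y)\in\R^2,
\]
and such that $\int\varphi\,d\mu+\int\psi\,d\nu=P^{mg}_{\mu,\nu}(c)$. Because $c$ is only assumed Borel, classical linear-programming duality is insufficient; one has to appeal to the quasi-sure/analytic-sets duality that gives a Borel triple $(\varphi,\psi,h)$ without a duality gap.

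Second, I would define
\[
\Xi:=\bigl\{(x,y)\in\R^2:\varphi(x)+\psi(y)+h(x)(y-x)=c(x,y)\bigr\}.
\]
This is a Borel set whose definition depends on the triple $(\varphi,\psi,h)$ but \emph{not} on the particular marginals, which is the key point for part (2). Integrating the inequality against any $Q\in\mg(\mu,\nu)$ and exploiting the martingale identity $\int h(x)(y-x)\,dQ=0$ (after localization to ensure integrability) gives weak duality
\[
\int c\,dQ\;\ge\;\int\varphi\,d\mu+\int\psi\,d\nu\;=\;P^{mg}_{\mu,\nu}(c),
\]
with equality if and only if $Q(\Xi)=1$. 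This is exactly statement (1).

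Third, for statement (2), fix probabilities $\bar\mu,\bar\nu$ in convex order and $Q\in\mg(\bar\mu,\bar\nu)$ with $Q(\Xi)=1$. The same dualizer $(\varphi,\psi,h)$ is admissible for $(\bar\mu,\bar\nu)$, so weak duality yields
\[
\int c\,d\bar Q\;\ge\;\int\varphi\,d\bar\mu+\int\psi\,d\bar\nu
\]
for every $\bar Q\in\mg(\bar\mu,\bar\nu)$. On the other hand, because $Q$ is concentrated on $\Xi$ the defining inequality is a pointwise equality $Q$-a.s., and integrating gives $\int c\,dQ=\int\varphi\,d\bar\mu+\int\psi\,d\bar\nu$. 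Hence $Q$ attains the lower bound and is optimal for $P^{mg}_{\bar\mu,\bar\nu}(c)$.

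The main obstacle is the first step: establishing strong duality for Borel cost functions without a gap, which requires the refined approach of \cite{BeNuTo16} (working quasi-surely and with analytically measurable dualizers rather than in the classical $L^1$-duality framework). A related technical annoyance is integrability of $\varphi$ and $\psi$ against marginals other than $\mu,\nu$ and the control of the martingale term $\int h(x)(y-x)\,dQ$; these are handled by truncation/localization arguments so that the identities above hold on a set of full $Q$-measure for every admissible $Q$.
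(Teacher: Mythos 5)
The paper does not prove this result; it is quoted verbatim from \cite[Corollary 7.8]{BeNuTo16}, so there is no ``paper's own proof'' to compare against. Your reconstruction via strong duality and the contact (equality) set of a dual triple $(\varphi,\psi,h)$ is the correct high-level strategy and is in fact the route taken in the cited reference.

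However, what you describe as ``technical annoyances handled by truncation/localization'' is the actual content of the theorem, and as written your argument has a genuine gap in part~(2). For a general Borel cost $c$, the duality result of \cite{BeNuTo16} does \emph{not} produce a finite-valued Borel triple with a pointwise inequality
$\varphi(x)+\psi(y)+h(x)(y-x)\le c(x,y)$ on all of $\R^2$. What one gets is an inequality valid outside a $(\mu,\nu)$-polar set, with $\varphi,\psi$ possibly taking the value $-\infty$ off the irreducible domain associated with $(\mu,\nu)$. This is harmless for part~(1), since every $Q\in\mg(\mu,\nu)$ puts no mass on a polar set; your argument then works. But in part~(2) the marginals $\bar\mu,\bar\nu$ are arbitrary, and a $(\mu,\nu)$-polar set need not be $(\bar\mu,\bar\nu)$-polar. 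Consequently, your weak-duality step $\int c\,d\bar Q\ge\int\varphi\,d\bar\mu+\int\psi\,d\bar\nu$ for every $\bar Q\in\mg(\bar\mu,\bar\nu)$ is not justified: the inequality could fail on a $\bar Q$-positive set, and $\varphi,\psi$ need not be $\bar\mu,\bar\nu$-integrable at all. Overcoming precisely this obstruction is what \cite{BeNuTo16} do: they pass to the irreducible decomposition of $(\mu,\nu)$ in the sense of \cite{BeJu16}, obtain dual optimizers with controlled domains of finiteness on each component, and assemble $\Xi$ component-by-component in such a way that concentration on $\Xi$ forces the dual inequality and the requisite integrability for the new marginals. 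That machinery cannot be replaced by a generic truncation argument, so your sketch is correct in spirit but incomplete where it matters most.
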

In this article we will use a local version which was derived previously in \cite{BeJu16}, see also \cite{Gr16} for a small extension. To state it we need to introduce the notion of \emph{competitor} which is also central in this paper.
\begin{definition}\label{def:monotone}
 \begin{enumerate}
  \item Let $\alpha$ and $\beta$ be two finite measures on $\R^2$. Then $\beta$ is called \emph{competitor} of $\alpha$ if it has the same marginals $\alpha_0$ and $\alpha_1$ and for any disintegrations $(\alpha_x)_x$ and $(\beta_x)_x$ of $\alpha$ resp.\ $\beta$ with respect to the first marginal $\alpha_0$ it holds for $\alpha_0$ a.e. $x$  that 
\begin{equation}\label{eq:compdef}
 \int y\ \alpha_x(dy) = \int y\ \beta_x(dy)\ .
\end{equation}
\item A set $\Xi\subset \R^2$ is called $c$-monotone (or just monotone) if for any finite measure $\alpha$ concentrated on $\Xi$ with $|\mbox{supp}(\alpha)|<\infty$ and any competitor $\beta$ of $\alpha$ it holds that 
$$ \int c\ d\alpha \leq \int c\ d\beta\ .$$
 \end{enumerate}
\end{definition}
Then we have the following result:
\begin{theorem}[{\cite[Lemma 1.11]{BeJu16}, \cite[Theorem 1.3]{Gr16}}]\label{thm:varlemma}
 Let $Q$ be an optimizer of \eqref{eq:MOT} with $P^{mg}_{\mu,\nu}(c)<\infty$. Then there is a $c$-monotone set $\Xi$ satisfying $Q(\Xi)=1.$ Moreover, if $c$ is upper-semicontinuous and bounded from above by integrable functions (with respect to $\mu$ resp.\ $\nu$) this condition is also sufficient.
\end{theorem}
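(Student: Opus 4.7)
\medskip
\noindent\emph{Proof proposal.}
The plan is to treat the two directions by different techniques: for necessity I would run a local improvement (cut-and-paste) argument in the spirit of the classical $c$-cyclical monotonicity proofs, suitably adapted so that the pasting preserves the martingale condition; for sufficiency, under the semicontinuity and integrability hypotheses, I would lift the finite-support monotonicity condition to a full comparison between martingale couplings by approximation together with an upper-semicontinuous passage to the limit.

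For the necessity, start with an optimizer $Q$ of \eqref{eq:MOT} with $\int c\, dQ < \infty$. The first step is to encode the obstruction to $c$-monotonicity as a measurable object: consider pairs $(\alpha,\beta)$ of finitely supported measures on $\R^2$, with $\alpha$ concentrated on $\operatorname{supp}(Q)$ and $\beta$ a competitor of $\alpha$ in the sense of Definition \ref{def:monotone}(1), satisfying the strict improvement $\int c\, d\beta < \int c\, d\alpha$. After bounding the number of atoms and the total mass, this collection is an analytic subset of the relevant product of spaces of finite measures, so the Jankov--von Neumann selection theorem yields a universally measurable assignment of improving competitors to bad configurations sampled from a disintegration of $Q$. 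The cut-and-paste step is then: if no Borel $c$-monotone set carried $Q$, I could integrate this selection against $Q$ to produce a sub-measure $\alpha \le Q$ and a replacement $\beta$ with the same marginals and conditional first moments as $\alpha$, so that $Q' := Q - \alpha + \beta$ is again a martingale coupling of $\mu$ and $\nu$ but with strictly smaller cost, contradicting optimality. This forces $Q$ to live on some Borel $c$-monotone $\Xi$.

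For the sufficiency, let $Q \in \mg(\mu,\nu)$ be concentrated on a Borel $c$-monotone $\Xi$ and $Q' \in \mg(\mu,\nu)$ arbitrary. I would show $\int c\, dQ \le \int c\, dQ'$ by approximation. First, produce finitely supported sub-measures $\alpha_n \le Q$ (still on $\Xi$) together with finitely supported sub-measures $\beta_n \le Q'$ having the same marginals and conditional barycenters as $\alpha_n$, so that each $\beta_n$ is a competitor of $\alpha_n$; this step relies on a Strassen/Kellerer-type discretization of martingale couplings that preserves conditional means exactly. The $c$-monotonicity of $\Xi$ then gives $\int c\, d\alpha_n \le \int c\, d\beta_n$ for every $n$. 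Letting $\alpha_n \uparrow Q$ and $\beta_n \uparrow Q'$ and using upper semicontinuity of $c$ together with an integrable upper bound $c(x,y)\le a(x)+b(y)$, $a\in L^1(\mu)$, $b\in L^1(\nu)$, I can pass to the limit via the upper Portmanteau inequality and dominated convergence to conclude $\int c\, dQ \le \int c\, dQ'$.

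The main obstacle in both directions is the same delicate measure-theoretic point: constructing a measurable, martingale-preserving correspondence between sub-measures of two couplings. For necessity one must verify analyticity of the improvement set and obtain a measurable selection compatible with a disintegration of $Q$; for sufficiency one must exhibit a discretization scheme that preserves conditional barycenters exactly, not merely approximately, so that the competitor relation of Definition \ref{def:monotone}(1) genuinely applies at each finite stage. Once this machinery is in place, the cost comparisons themselves are essentially tautologies, and the role of upper semicontinuity in the sufficiency half is standard.
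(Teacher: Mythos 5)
The paper does not prove this statement at all: it is quoted verbatim from the literature (\cite[Lemma 1.11]{BeJu16} for the necessity half and \cite[Theorem 1.3]{Gr16} for the sufficiency half), so there is no in-paper argument to compare against. Judged against the known proofs, your necessity half is essentially the right route: encode the improvable finite configurations as an analytic set, show it is $Q^{\otimes N}$-null for each $N$ by a Jankov--von Neumann selection plus an averaged cut-and-paste (one must scale the perturbation so that $Q-\alpha+\beta\geq 0$, and check that the competitor property, holding conditionally in $x$, preserves the martingale constraint), and then remove the resulting null sets to obtain a Borel $c$-monotone $\Xi$. The only thing I would insist you state more carefully is the order of quantifiers: you do not argue by contradiction with ``no Borel monotone set carries $Q$'' but rather prove nullity of the bad set for each fixed $N$ and assemble $\Xi$ afterwards, taking care with the projection of the analytic bad set.

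The sufficiency half has a genuine gap. You propose to produce finitely supported sub-measures $\alpha_n\leq Q$ concentrated on $\Xi$ with $\alpha_n\uparrow Q$, matched to finitely supported competitors $\beta_n\leq Q'$. If $Q$ is diffuse (the generic case), $Q$ has \emph{no} nontrivial finitely supported sub-measures, so this approximation scheme cannot even start; and if you instead discretize by partitioning $\R$ into small cells and pushing forward, the approximants are no longer supported on $\Xi$ and the conditional barycenters are only approximately matched, so Definition \ref{def:monotone} no longer applies at the finite stage --- exactly the point you flag but do not resolve. The actual arguments in the literature avoid monotone approximation from below altogether: one disintegrates $Q$ and $Q'$ over a common countably generated sub-$\sigma$-algebra so that (almost) every fiber is a finitely or countably supported pair of measures with identical marginals and identical conditional means, applies the finite-configuration inequality fiberwise, and integrates; upper semicontinuity and the integrable majorant $c(x,y)\leq a(x)+b(y)$ enter only to justify the final integration, not a Portmanteau limit. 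Without replacing your approximation step by such a conditioning/mixture decomposition, the sufficiency direction does not go through as written.
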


\subsection{The optimal Skorokhod embedding problem}\label{sec:OptSEP}

Fix $\mu,\nu\in\mathcal P(\R)$ increasing in convex order. For notational convenience we assume that $\nu$ has second moment. For the general case we refer to \cite{BeCoHu16}. Let $(\Omega,\G,(\G_t)_{t\geq 0},\P)$ be some complete stochastic base which is sufficiently rich to support a Brownian motion $B$ starting in $\mu$  and a uniform $\G_0$-measurable random variable independent of $B$. Let $S:=\{(f,s):f\in C([0,s])\}$  and fix $\gamma:S\to\R$. We put 
$\gamma_t:=\gamma((B_s)_{s\leq t},t)$ and note that this is an optional process. For $(f,s),(g,t)\in S$ we define the concatenation of $f$ and $g$ by
$$ f\oplus g(u)=\begin{cases}
                 f(u) & \text{ if } u\leq s,\\
f(s)+g(u-t)-g(t) & \text{ if } u > s .
                \end{cases}$$
For $(f,s)\in S$ we set $\gamma^{(f,s)\oplus}_t:=\gamma(f\oplus (B_u)_{u\leq t},s+t).$ 

The optimal Skorokhod embedding problem is to find a minimizer of 
\begin{equation}\label{eq:OptSEP}
 \tau\mapsto \E[\gamma_\tau] \tag{\sf{OptSEP}}
\end{equation}
 among all $\G$ stopping times $\tau$ such that $B_\tau\sim \nu$ and $(B_{t\wedge \tau})_{t\geq 0}$ is uniformly integrable. We write $\mathsf{Opt}_\gamma$ for the set of all optimizer of $\eqref{eq:OptSEP}$. Considering another functional $\tilde \gamma:S\to\R$ we say that $\hat\tau\in\mathsf{Opt}_\gamma$ is a secondary optimizer if it minimizes
\begin{equation}\label{eq:OptSEP2}
 \tau\mapsto \E[\tilde\gamma_\tau] \tag{$\mathsf{OptSEP}_2$}
\end{equation}
among all $\tau\in\mathsf{Opt}_\gamma$. We will say that $\eqref{eq:OptSEP}$ is well posed if $\E[\gamma_\tau]$ exists with values in $(-\infty,\infty]$ for all $\tau$ satisfying $B_\tau\sim\nu$ and $(B_{t\wedge \tau})_{t\geq 0}$ is uniformly integrable and it is finite for one such $\tau$; similarly for $\eqref{eq:OptSEP2}$. It is not hard to see that an optimizer to $\eqref{eq:OptSEP2}$ exists if for example $\gamma,\tilde\gamma$ are lower semicontinuous and bounded from below (see \cite[Theorem 4.1]{BeCoHu16}).

\begin{definition}\label{def:SG}
  \label{def:SG2B} The pair $\big((f,s), (g,t)\big)\in S\times S$ constitutes a \emph{stop-go pair}, written $\big((f,s), (g,t)\big)\in\SG$, iff $f(s)=g(t)$, and 
 for \emph{every} $(\G_t)_{t \ge 0}$-stopping time $\sigma$ which satisfies $0 < \E[\sigma] < \infty$, 
  \begin{align} \label{BPIneq1}
    \E\big[\big(\gamma^{(f,s)\oplus }\big)_{\sigma}\big]\ +\ \gamma(g,t)\quad > \quad \gamma(f,s) \ +\ \E\big[\big(\gamma^{(g,t)\oplus }\big)_{\sigma}\big],
  \end{align}
  whenever both sides are well defined, and the left-hand side  is finite. We say that $\big((f,s), (g,t)\big)\in S\times S$ constitutes a \emph{secondary stop-go pair}, written $\big((f,s), (g,t)\big)\in\SG_2$, iff $f(s)=g(t)$, and  for \emph{every} $(\G_t)_{t \ge 0}$-stopping time $\sigma$ which satisfies $0 < \E[\sigma] < \infty$ the inequality \eqref{BPIneq1} holds with $\geq$   and if
  \begin{align}\label{BPIneq2}
    \E\big[\big(\gamma^{(f,s)\oplus }\big)_{\sigma}\big]\ +\ \gamma(g,t)\quad = \quad \gamma(f,s) \ +\ \E\big[\big(\gamma^{(g,t)\oplus }\big)_{\sigma}\big]
  \end{align}
  then
  \begin{align}
    \E\big[\big(\tilde\gamma^{(f,s)\oplus }\big)_{\sigma}\big]\ +\ \tilde\gamma(g,t)\quad&>\quad \tilde\gamma(f,s) \ +\ \E\big[\big(\tilde\gamma^{(g,t)\oplus }\big)_{\sigma}\big], \label{BPIneq3}
  \end{align}
  whenever both sides are well-defined and the left-hand side (of \eqref{BPIneq3}) is finite. 
\end{definition}

\begin{definition} \label{def:ggtildeMon}
We say that $\Gamma\subset S$ is $\gamma$-monotone  if 
\begin{align}\label{GM3}\SG \cap \big( \Gamma^< \times \Gamma\big)=\emptyset,
\end{align} 
where $\Gamma^<:=\{ (f,s) \in S: \exists (g,t)\in\Gamma, t>s, g_{\llcorner [0,s]}\equiv f\}.$ We say that $\Gamma\subset S$ is $ \tilde\gamma| \gamma$-monotone if \eqref{GM3} holds with $\SSG$ in place of $\SG$.
\end{definition}

The following theorem is proven in \cite[Theorems 5.7 and 5.16]{BeCoHu16}.

\begin{theorem}[Monotonicity Principle]\label{GlobalLocal3}
  Let $\gamma, \tilde \gamma:S\to\R$ be Borel measurable, suppose that \eqref{eq:OptSEP} is well posed and that $\hat \tau$ is an optimizer.
  Then there exists a \emph{$\gamma$-monotone} Borel set $\Gamma\subset S$ such that $\P$-a.s.
\begin{align}\label{GammaSupport2} ((B_t)_{t\leq\hat\tau},\hat\tau)\in\Gamma\;.\end{align}
If \eqref{eq:OptSEP2} is well posed and $\hat\tau$ is also an optimizer to this problem, then there exists a
\emph{$\tilde\gamma| \gamma$-monotone} Borel set $\Gamma\subset S$ such that $\P$-a.s.
\begin{align}\label{GammaSupport3} ((B_t)_{t\leq\hat\tau},\hat\tau)\in\Gamma\;.\end{align}
\end{theorem}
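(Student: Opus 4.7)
The plan is a contradiction argument modeled on the proof of $c$-cyclical monotonicity in classical optimal transport, but transported to the pathwise Skorokhod framework. First I would represent $\hat\tau$ as a randomized stopping rule and consider the probability measure $\xi$ on $S$ induced by pushing forward $\P$ under $\omega\mapsto((B_t(\omega))_{t\leq\hat\tau(\omega)},\hat\tau(\omega))$. The conclusion $\P$-a.s.\ $((B_t)_{t\leq\hat\tau},\hat\tau)\in\Gamma$ then amounts to $\xi(\Gamma)=1$, so the task reduces to producing a Borel $\gamma$-monotone set of full $\xi$-mass.

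Assume for contradiction that no such $\Gamma$ exists. Then for every Borel $\Gamma$ with $\xi(\Gamma)=1$ one has $\SG\cap(\Gamma^<\times\Gamma)\neq\emptyset$. A Jankov--von Neumann / Choquet-capacitability selection argument, applied to the analytic set of "bad" pairs and to the projection of $\Gamma^<$ under $\hat\tau$, then yields Borel maps extracting, on a subset of positive $\P$-mass, a pair $\pair\in\SG$ with $(g,t)$ an actually realized stopped trajectory, $(f,s)$ a strict initial segment of another realized stopped trajectory, and $f(s)=g(t)$.

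The geometric heart of the argument is a \emph{swap of futures at a common level}. Choose a small mass of sample paths that at time $s$ coincide with $f$ and have not yet been stopped, together with an equal mass of paths already stopped at $(g,t)$. Modify $\hat\tau$ by interchanging these two continuation behaviours: paths of the first type are stopped immediately with the conditional frequency that paths of the second type were stopped there, while paths of the second type are un-stopped and continued using the stopping rule originally destined for the first. Because $f(s)=g(t)$ and Brownian motion is strong Markov, the new randomized rule $\tau'$ still embeds $\nu$ and remains uniformly integrable. The net change in $\E[\gamma_\tau]$ is, by construction, the difference of the two sides of \eqref{BPIneq1} integrated against a strictly positive kernel; the $\SG$ property makes this strictly negative, contradicting optimality of $\hat\tau$.

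For the secondary statement one runs the same swap, but restricted to competitors that are already primary-optimal. Primary optimality of $\hat\tau$ prevents a strict decrease in $\E[\gamma_\tau]$, which by the $\SSG$ definition forces equality in \eqref{BPIneq1} along the selected pair; inequality \eqref{BPIneq3} is then strict and the swap produces a $\gamma$-optimal stopping rule with strictly smaller $\tilde\gamma$-expectation, contradicting secondary optimality. I expect the genuine difficulty to lie not in the combinatorics of the swap but in its \emph{measurable} implementation: one has to assemble $\tau'$ from measurably chosen regular conditional distributions of the Brownian future given its past, ensure that the pairing of "first type" and "second type" mass is Borel in the selected $(f,s)$ and $(g,t)$, and verify that marginals and uniform integrability are preserved under the modification. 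This is precisely where the randomized-stopping / joining formalism of \cite{BeCoHu16} is indispensable: it reformulates the swap as an inequality between two couplings of Wiener measure, turning the pathwise surgery into a clean measure-theoretic manipulation.
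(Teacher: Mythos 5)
The paper offers no proof of this theorem: it is imported verbatim from \cite[Theorems 5.7 and 5.16]{BeCoHu16}, so there is nothing internal to compare your argument against. Your sketch is an accurate outline of the strategy used in that reference (pushforward of $\hat\tau$ to a measure on $S$, analytic selection of a positive-mass family of stop-go pairs from the failure of \eqref{GM3}, and the strong-Markov swap of futures at the common level $f(s)=g(t)$, refined to the secondary case via $\SSG$), though as written it defers the genuinely hard steps --- the duality/capacitability argument producing the swappable family and the measurable surgery on randomized stopping times --- to the machinery of \cite{BeCoHu16} rather than carrying them out.
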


\section{MOT via SEP}\label{sec:mot}

To show the connection between MOT and the SEP, we sketch how the properties of solutions to MOT for well-known cost functions can
be derived using the monotonicity results for SEP. We will do this in detail for
the case of Spence--Mirrlees cost functions. The proof for the form of maximizing and minimizing transports
for $|x-y|$ is very similar and will therefore be reduced to a sketch.

\begin{proposition}\label{prop:SpMi}
Assume that $\mu(\{x\})=0$ for all $x\in\R$. Then, there exists a unique stopping time $\tau_{BJ}$ which minimizes
\[\E[c(B_0,B_{\tau})]\]
over all solutions to SEP for any cost function $c$ satisfying the Spence--Mirrlees condition $c_{xyy}<0$. It is of the form 
$\tau_{BJ} = \inf\{t > 0: B_t-B_0>\psi(B_t) \}$ a.s., for some measurable function $\psi$ (cf.\ Figure \ref{fig:rightborder}).
This is the unique solution to SEP of this form.
\end{proposition}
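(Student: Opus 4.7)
The plan is to apply the monotonicity principle of Theorem \ref{GlobalLocal3} to the path functional $\gamma(g,t) := c(g(0),g(t))$ and then to read off the barrier shape from the Spence--Mirrlees condition.

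First I would classify the stop-go pairs. Fix $(f,s),(g,t)\in S$ with $f(s)=g(t)=y$, set $x_1 := f(0)$, $x_2 := g(0)$, and let $\sigma$ be a $(\G_t)$-stopping time with $0 < \E[\sigma] < \infty$. Writing $\Delta_\sigma$ for the Brownian increment driven by $\sigma$, the inequality \eqref{BPIneq1} collapses to
\begin{equation*}
\Phi(x_1) > \Phi(x_2), \qquad \Phi(x):=\E\bigl[c(x,y+\Delta_\sigma)\bigr]-c(x,y).
\end{equation*}
An application of It\^o's formula (after a mollification of $c$ if necessary) gives $\Phi'(x)=\tfrac12\,\E\bigl[\int_0^\sigma c_{xyy}(x,y+\Delta_u)\,du\bigr]<0$, so $\Phi$ is strictly decreasing and $\bigl((f,s),(g,t)\bigr)\in\SG$ if and only if $f(0)<g(0)$.

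Next I would extract the barrier. By Theorem \ref{GlobalLocal3}, any optimizer $\hat\tau$ is supported on a $\gamma$-monotone Borel set $\Gamma\subseteq S$. Setting
\begin{equation*}
\xi(y) := \sup\bigl\{g(0) : (g,t)\in\Gamma,\ g(t)=y\bigr\},
\end{equation*}
the identity $\SG\cap(\Gamma^<\times\Gamma)=\emptyset$ combined with the classification above yields $f(0)\geq \xi(y)$ for every $(f,s)\in\Gamma^<$ with $f(s)=y$. In words, a Brownian trajectory starting strictly below the threshold $\xi$ cannot cross a level $y$ without being stopped there. This forces $\hat\tau = \inf\{t>0 : B_0 < \xi(B_t)\}$ $\P$-a.s., and putting $\psi(y):=y-\xi(y)$ delivers the advertised form $\hat\tau=\inf\{t>0 : B_t-B_0>\psi(B_t)\}$. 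Borel measurability of $\psi$ follows from a routine selection argument applied to $\Gamma$.

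For the uniqueness claims I would argue as follows. The coupling $(B_0,B_{\hat\tau})$ induced by any minimizer of $\E[c(B_0,B_\tau)]$ is a left-monotone martingale coupling of $\mu$ and $\nu$, which is unique under $c_{xyy}<0$ by \cite{BeJu16,HeTo13}. Since $\mu$ is atomless, the right-barrier structure established above then pins down $\hat\tau$ up to $\P$-null sets. Secondly, if two stopping times of the prescribed form both embed $\nu$, comparing the symmetric differences of their barriers via a Loynes-type argument forces the barrier functions to agree $\nu$-a.e. The main obstacle I foresee is Step~2: the It\^o calculation requires some regularity on $c$ (or a mollification pass), and verifying strictness of the stop-go inequality for \emph{every} nontrivial $\sigma$ is precisely where the strict inequality $c_{xyy}<0$, as opposed to $c_{xyy}\leq 0$, is essential.
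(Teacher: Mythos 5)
Your overall strategy matches the paper's: apply the monotonicity principle of Theorem~\ref{GlobalLocal3} to $\gamma(g,t)=c(g(0),g(t))$, classify the stop-go pairs using $c_{xyy}<0$, and then read off a right barrier in the $(B_t-B_0,B_t)$ phase space. Your It\^o-formula computation of $\Phi'(x)$ is a legitimate alternative to the paper's Jensen/strict-concavity-of-$c_x$ argument; both deliver the inclusion $\SG\supseteq\{f(s)=g(t),\,g(0)>f(0)\}$, which is all that is needed (the ``if and only if'' you claim is an unverified overreach, but the direction you use is the correct one).

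The genuine gap is in the step ``This forces $\hat\tau=\inf\{t>0:B_0<\xi(B_t)\}$ $\P$-a.s.'' The $\gamma$-monotonicity of $\Gamma$ does \emph{not} force that equality. What it gives you is the sandwich $\tau_\cl\leq\hat\tau\leq\tau_\op$, where $\tau_\op$ is the hitting time of the open region $\{B_0<\xi(B_t)\}$ (because prefixes in $\Gamma^<$ satisfy $f(0)\geq\xi(f(s))$) and $\tau_\cl$ is the hitting time of the closed region $\{B_0\leq\xi(B_t)\}$ (because the stopped path itself lies in $\Gamma$, so $B_0\leq\xi(B_{\hat\tau})$). Closing the gap $\tau_\cl=\tau_\op$ a.s.\ requires a separate analytic argument — the paper's Lemma~\ref{lem:barrierequality} — and this is precisely where the hypothesis that $\mu$ is atomless enters. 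In your write-up the atomlessness only appears in the uniqueness discussion, not where it is actually needed; as written, the deduction of the barrier form from $\gamma$-monotonicity alone is incorrect, because without the $\tau_\cl=\tau_\op$ lemma one could in principle have $\hat\tau$ strictly between the two hitting times. You should insert the sandwich explicitly and then invoke (or reprove) the open-versus-closed barrier equality using the continuity of $\mu$.
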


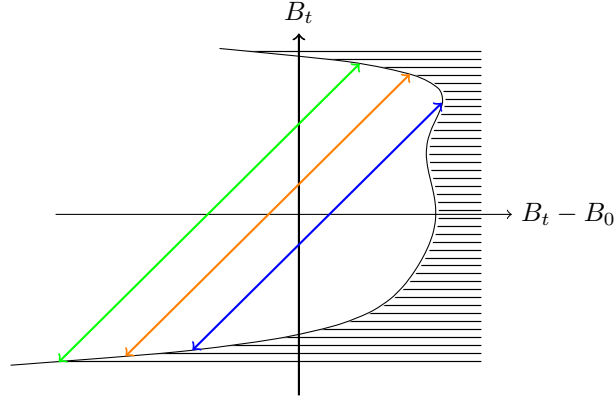
\begin{figure}
\begin{tikzpicture}[scale=0.4]
\draw[->] (-8,0) -- (7,0) node[right] {$B_t-B_0$};
\draw[->,thick] (0,-6) -- (0,6) node[above] {$B_t$};
\draw [smooth,variable=\y,domain=-5:5.5] plot ( {4.5+0.02*\y-0.2*\y*\y+0.02*\y*\y*\y+0.02*\y*\y*\y*\y+0.001*\y*\y*\y*\y*\y-0.001*\y*\y*\y*\y*\y*\y},{\y});
\fill [pattern = horizontal lines] (6,-5) -- plot [smooth,variable=\y,domain=-5:5.5] ( {4.6+0.02*\y-0.2*\y*\y+0.02*\y*\y*\y+0.02*\y*\y*\y*\y+0.001*\y*\y*\y*\y*\y-0.001*\y*\y*\y*\y*\y*\y},{\y}) -- (6,5.5) -- cycle;
\draw[<->,color=blue,thick] (-3.5,-4.5) -- (4.7,3.7); %
\draw[<->,color=orange,thick] (-5.7,-4.7) -- (3.65,4.65); %
\draw[<->,color=green,thick] (-7.9,-4.9) -- (2,5); %
\end{tikzpicture}
\caption{The Brownian motion travels along lines parallel to the main diagonal and is stopped
when it hits the boundary. }\label{fig:rightborder}
\end{figure}

\begin{figure}
\begin{tabular}{m{2.2in}m{0.5cm}m{2in}}
\begin{tikzpicture}[scale=0.3]
\draw[->] (-8,0) -- (7,0) node[above] {\footnotesize $B_t-B_0$};
\draw[->,thick] (0,-6) -- (0,6) node[above] {\footnotesize $B_t$};
\draw[-,color = blue,dashed] (0,1) -- (3,4);
\draw[-,color = blue,dashed] (0,2) -- (2,4);
\draw[<->,thick,color=blue] (2,3) -- (4,5);
\draw[fill,color=blue] (3,4) circle (1.5pt);
\draw[fill,color=blue] (2,4) circle (3pt);

\draw[-,color = orange,dashed] (0,-1) -- (-3,-4);
\draw[-,color = orange,dashed] (0,-2) -- (-2,-4);
\draw[<->,thick,color=orange] (-3,-5) -- (-1,-3);
\draw[fill,color=orange] (-3,-4) circle (3pt);
\draw[fill,color=orange] (-2,-4) circle (2pt);
\end{tikzpicture}
& {\large $\phantom{>}$} &
\begin{tikzpicture}[scale=0.3]
\draw[->] (-8,0) -- (7,0) node[above] {\footnotesize $B_t-B_0$};
\draw[->,thick] (0,-6) -- (0,6) node[above] {\footnotesize $B_t$};
\draw[-,color = blue,dashed] (0,1) -- (3,4);
\draw[-,color = blue,dashed] (0,2) -- (2,4);
\draw[<->,thick,color=blue] (1,3) -- (3,5);
\draw[fill,color=blue] (3,4) circle (3pt);
\draw[fill,color=blue] (2,4) circle (1.5pt);

\draw[-,color = orange,dashed] (0,-1) -- (-3,-4);
\draw[-,color = orange,dashed] (0,-2) -- (-2,-4);
\draw[<->,thick,color=orange] (-4,-5) -- (-2,-3);
\draw[fill,color=orange] (-3,-4) circle (2pt);
\draw[fill,color=orange] (-2,-4) circle (3pt);
\end{tikzpicture}
\end{tabular}
\caption{This picture shows two examples of a stop-go pair (one in blue, one in orange). For both pairs, the left-hand image
shows a pair that leads to higher costs than the right-hand pair for a Spence--Mirrlees type cost function.}\label{fig:sgpairs}
\end{figure}
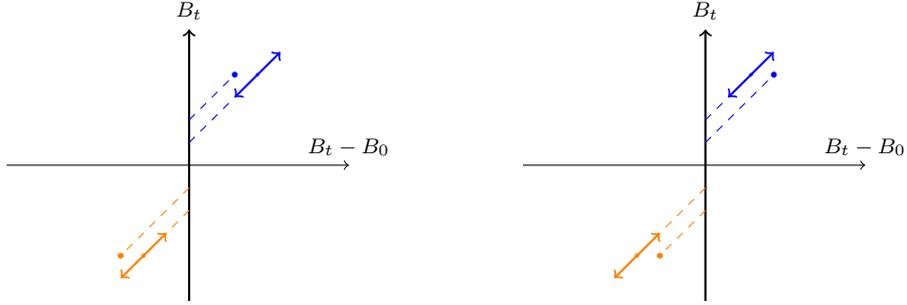

\begin{proof}
Pick $c$ such that \eqref{eq:OptSEP} is well posed. Since $c$ is continuous, hence lower-semicontinuous, there is a minimizer $\hat\tau$. Put $\gamma(g,t)=c(g(0),g(t))$. Pick by Theorem \ref{GlobalLocal3} a $\gamma$-monotone set $\Gamma$ supporting $\hat\tau.$
We claim that
\begin{equation}\label{eq:SGSpMi}
\SG \supseteq \{((f,s),(g,t)) \in S \times S : g(t) = f(s), g(0) > f(0) \}.
\end{equation}
This is represented in Figure \ref{fig:sgpairs}.
To see this, pick some stopping time $\sigma$ with positive and finite expectation and observe
that for some fixed $y \in \R$ the map
\begin{align}\label{eq:concave}
x \mapsto \E[c(x,y+B_\sigma)] - c(x,y) 
\end{align}
is strictly decreasing due to the strict concavity of $y \mapsto c_x(x,y)$.
Pick some pair $((f,s),(g,t)) \in S \times S$ satisfying $f(s)=g(t)$ and $g(0) > f(0)$. Then \eqref{eq:concave} implies
\[\E[c(f(0),f(s)+B_\sigma)] + c(g(0),g(t)) > c(f(0),f(s)) + \E[c(g(0),g(t)+B_\sigma)]. \]
Now define
\begin{align*}
 \mathcal R_{\op}&:=\{(d,y):\exists (g,t)\in\Gamma, g(t)=y, d>g(t)-g(0)\}\\
 \mathcal R_{\cl}&:=\{(d,y):\exists (g,t)\in\Gamma, g(t)=y, d\geq g(t)-g(0)\}.
\end{align*}
Fix $(g,t)\in\Gamma$. Then, $(g(t)-g(0),g(t))\in\mathcal R_\cl$. 
Suppose that $\inf\{s \in [0,t]: (g(s)-g(0),g(s)) \in \mathcal{R}_{\op}\} < t$. Then there exists $s<t$ such that $(f,s) := \left(g_{\llcorner [0,s]},s\right) \in \Gamma^{<}$ and $(f(s)-f(0), f(s)) \in \mathcal{R}_{\op}$. By definition of $\mathcal{R}_{\op}$, it follows that there exists another path $(k,u) \in \Gamma$ such that $k(u) = f(s)$ and $k(0) > f(0)$. But then $\big( (f,s), (k,u)\big) \in \SG\cap \big(\Gamma^<\times \Gamma\big)$ which cannot be the case. Hence, $(g,t) \in \Gamma$ implies
\begin{align}\label{eq:closed le open}
   \inf\{s \in [0,t]: &(g(s)-g(0),g(s)) \in \mathcal{R}_{\cl}\} \\
& \le t
  \le \inf\{s \in [0,t]: (g(s)-g(0),g(s)) \in \mathcal{R}_{\op}\}.\nonumber
\end{align}
Denote the  hitting times of $\mathcal R_\op$ and $\mathcal R_{\cl}$ by $\tau_{\op}$ and $\tau_{\cl}$ resp., i.e. 
$$ \tau_\op(\omega)=\inf\{ t \geq 0 : (B_t(\omega)-B_0(\omega),B_t(\omega))\in\mathcal{R}_\op\}\ ,$$
and correspondingly for $\tau_\cl$. Pick $\omega$ such that $(g,t)=((B_s)_{s\leq\hat\tau}(\omega),\hat\tau(\omega))\in\Gamma.$ It then follows from \eqref{eq:closed le open} that
$$\tau_\cl(\omega)\leq\hat\tau(\omega)\leq\tau_{\op}(\omega)\ .$$
Since $\P(((B_s)_{s\leq\hat\tau}(\omega),\hat\tau(\omega))\in\Gamma)=1$ we have $\tau_\cl\leq\hat\tau\leq\tau_{\op}$ a.s. It remains to show that $\tau_\cl=\tau_\op$ a.s. Put $\psi(y):=\inf\{d : (d,y)\in\mathcal R_\cl\}$ and observe that $\tau_\cl=\inf\{t\geq 0:B_t-B_0\geq\psi(B_t)\}$ and $\tau_\op=\inf\{t\geq 0 : B_t-B_0> \psi(B_t)\}.$ (Note that $\psi$ is in general only lower semianalytic, and in particular universally measurable. We can therefore replace $\psi$ by a Borel measurable function that agrees with $\psi$ outside a set $N$ such that
$\P[B_{\tau_\op}\in N] = \P[B_{\tau_\cl}\in N] = 0$ and
is $\psi(x) \equiv +\infty$ on $N$. We denote this function again by $\psi$, as the transformation does not affect $\tau_{\op/\cl}$.) To show that $\tau_\op=\tau_\cl$ it is easier to work in a slightly deformed geometry. Using the transformation $(d,y)\mapsto (d-y,y)$ and setting
\begin{align*}
\tilde{\mathcal R}_\op &= \{(x,y) : x>\tilde\psi(y):=\psi(y)-y\}\\
\tilde{\mathcal R}_\cl &= \{(x,y) : x\ge\tilde\psi(y):=\psi(y)-y\}
\end{align*}
it is immediate that the respective hitting times $\tilde\tau_\op$ and $\tilde\tau_\cl$ of the process $(-B_0,B_t)$ agree with $\tau_\op$ and $\tau_\cl$. Their almost sure equality follows from Lemma \ref{lem:barrierequality} below.
%
%

Uniqueness of this embedding can be shown by an argument due to Loynes \cite{Lo70} based on previous work by Root \cite{Ro69} for which we refer to \cite{BeHeTo15}.
\end{proof}

\begin{lemma}\label{lem:barrierequality}
Let $\pi$ be a probability measure on $\R^2$ such that the first marginal $\pi_1$ is continuous (i.e. atom free) and let $\psi: \R\to \R$ be a Borel function. Define subsets
$$R_\op:= \{(x,y): x>\psi(y)\},\quad R_\cl:= \{(x,y): x\geq\psi(y)\}.$$
Start a vertically moving Brownian motion $B$ in $\pi$ and define
$$ \tau_\op(x,y):= \inf\{ t\geq 0: (x, y+B_t)\in R_\op\}, \quad \tau_\cl(x,y):= \inf\{ t\geq 0: (x, y+B_t)\in R_\cl\}.$$
Let $(X,Y)$ have distribution $\pi$ and be independent from
$B$, then $\tau_\cl(X,Y)=\tau_\op(X,Y)$ almost surely.
\end{lemma}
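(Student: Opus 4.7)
The plan is to establish $\P(\tau_{\cl}(X,Y) < \tau_{\op}(X,Y)) = 0$; since $R_\op \subset R_\cl$ gives $\tau_{\cl} \le \tau_{\op}$ automatically, this will finish the proof. The pivotal observation is that on the event $\{\tau_\cl < \tau_\op\}$ the random variable $X$ is forced to be a \emph{weak local minimum value} of $\psi$, meaning there exists $z_0 \in \R$ with $\psi(z_0) = X$ and an open neighbourhood $U \ni z_0$ on which $\psi \ge X$. The argument then concludes because the set of weak local minimum values of $\psi$ is countable while $\pi_1$ is atom-free.

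To extract this local-minimum structure on $\{\tau_\cl < \tau_\op\}$, I would proceed as follows. By definition of $\tau_\op$, $\psi(Y+B_t) \ge X$ for every $t \in [\tau_\cl,\tau_\op)$, while by definition of $\tau_\cl$ there is a sequence $t_n \downarrow \tau_\cl$ with $\psi(Y+B_{t_n}) \le X$. For $n$ large enough, $t_n \in [\tau_\cl,\tau_\op)$, so the two inequalities force $\psi(Y+B_{t_n}) = X$. Fix such a $t_* \in [\tau_\cl,\tau_\op)$ and set $Z := Y + B_{t_*}$. Using continuity and the immediate two-sided oscillation of Brownian motion around time $t_*$ (a consequence of the strong Markov property and Blumenthal's $0$-$1$ law), the set $\{Y+B_s : s \in [\tau_\cl,\tau_\op)\}$ contains an open neighbourhood of $Z$, and on this neighbourhood $\psi \ge X$ by the choice of $\tau_\op$. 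Hence $Z$ witnesses that $X$ is a weak local minimum value of $\psi$.

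Countability of the set of weak local minimum values follows from a rational-interval argument: given such a value $v$ with witness $z_0$ and open $U \ni z_0$ where $\psi \ge v$, pick rationals $p<z_0<q$ with $[p,q]\subset U$; then $v = \psi(z_0) = \inf_{(p,q)} \psi$ and the infimum is attained. Since each pair $(p,q) \in \Q^2$ yields at most one such value, namely the single real number $\inf_{(p,q)} \psi$ (provided it is attained), the collection of weak local minimum values sits inside the countable set $\{\inf_{(p,q)} \psi : (p,q)\in \Q^2\}$. Consequently, on $\{\tau_\cl<\tau_\op\}$ the random variable $X$ takes values in a countable set, and atom-freeness of $\pi_1$ forces $\P(\tau_\cl < \tau_\op)=0$.

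The most delicate step will be the identification of $Z$ in the second paragraph: because $\psi$ is only Borel, one cannot in general take $t_* = \tau_\cl$, and one must work with an approximating sequence $t_n \downarrow \tau_\cl$ together with a careful use of Brownian oscillation to produce the open neighbourhood on which $\psi \ge X$. The remaining parts of the argument reduce to elementary real analysis combined with the atom-free hypothesis on $\pi_1$.
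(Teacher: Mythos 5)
Your proof is correct (modulo a small imprecision noted below) and takes a genuinely different route from the paper's. The paper splits into two cases: for $\tau_\cl = 0$ it uses the local-minimum observation (essentially your argument restricted to time $0$), and for $\tau_\cl > 0$ it switches to a discretization scheme, sandwiching $\tau_\cl$ and $\tau_\op$ between hitting times of a coarsened outer barrier and of a thinned union of rays built from a fine partition of the vertical axis, then showing the gap between these bracketing times vanishes as the partition is refined. You handle both cases in one stroke by applying the strong Markov property at the stopping time $\tau_\cl$ rather than only at time $0$, which is more direct and avoids the approximation scheme entirely; your rational-interval proof that the set of weak local minimum values is countable is also cleaner than the paper's contradiction argument. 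What you give up is that the paper's bracketing scheme is a self-contained approximation robust to the fine structure of $\psi$, whereas your argument leans decisively on instantaneous two-sided oscillation of Brownian motion after a stopping time.

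One imprecision to fix: you invoke Blumenthal's $0$--$1$ law ``around time $t_*$'', but $t_*$ is an arbitrary random time in $[\tau_\cl,\tau_\op)$, not a stopping time, so the strong Markov property does not directly deliver two-sided oscillation there (and a priori $Y+B_{t_*}$ could be a one-sided extremum of the path on that window). The fix, which you partly anticipate in your final paragraph, is to apply strong Markov and Blumenthal at $\tau_\cl$ itself --- a stopping time by the d\'ebut theorem, being the hitting time of a Borel set in a complete filtration --- to conclude that for small $\delta < \tau_\op - \tau_\cl$ the range $\{Y+B_s : s \in [\tau_\cl,\tau_\cl+\delta]\}$ almost surely contains an open neighbourhood $U$ of $Y+B_{\tau_\cl}$ on which $\psi \ge X$, and only then choose $t_* = t_n$ with $n$ large enough that $Z = Y+B_{t_n} \in U$.
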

\begin{proof}
As $R_\op \subseteq R_\cl$, we have $\tau_\cl(x,y)\leq \tau_\op(x,y)$. 

 We say that $y$ is a local minimum of $\psi$ if $\psi(y')\geq \psi(y)$ for all $y'$ in a neighbourhood of  $y$. Set 
$$I:=\{\psi (y): y \text{ is a local minimum of } \psi\}.$$
It then follows that $I$ is at most countable: assume by contradiction that there exist an uncountable family $A\subseteq \R$ and corresponding neighborhoods $(a-\eps_a, a+\eps_a), a\in A$ such that for $x\in (a-\eps_a, a+\eps_a)$ we have $\psi(x)\geq \psi(a)$  and $a,a'\in A, a\neq a'$ implies $\psi(a)\neq \psi(a')$. Passing to an uncountable subset of $A$, we can assume that there is some $\eta >0$ such that $\eps_a>\eta$ for all $a\in A$. For $a\neq a'$ both in $A$ we cannot have $|a-a'| < \eta$ for then  $a\in (a'-\eps_{a'}, a'+\eps_{a'})$ as well as $a'\in (a-\eps_{a}, a+\eps_{a})$ which would imply that $\psi(a)=\psi(a')$. Hence $|a-a'|\geq \eta$ which implies that $A$ is countable, giving a contradiction. 

\smallskip

On the complement of $I\times \R$ we have almost surely
\begin{align}\tau_\op(x,y)=0 \quad \Longleftrightarrow \quad \tau_\cl(x,y)=0\end{align}
and since $I$ is countable and $\pi_1$ is continuous we have $\pi(I \times \R) = 0$, we can
conclude that $\{\tau_\op(X,Y) = 0 \} \Delta \{\tau_\cl(X,Y) = 0\}$ is
a nullset. 

Now consider $x,y \in I^c \times \R$ such that $\tau_\cl(x,y),\tau_\op(x,y) > 0$ and fix $\eps>0$. 
Pick $\eta>0$ such that $$\P(\underbrace{\tau_\cl>\eta, |B_{\tau_\cl} - y| > \eta}_{=:G})> 1-\eps.$$
Fix a family of intervals $(I_n)_n$ such that they form a partition of the vertical axis. For each $n$ define $x_n := \inf\{\psi(y):y\in I_n\}$ and set $a_n := x_n - \delta$
and $b_n := x_n + \delta$ for some $\delta >0$.
Fix $y_n\in I_n$ such that $[b_n,\infty)\times \{y_n\}\subset R_\op$. We then define stopping times 
\begin{align}\underline \tau&:= \inf\left\{t \ge 0: (x,y+B_t) \in \bigcup_{n: I_n\cap (y-\eta, y+\eta)=\emptyset}  [a_n,\infty)\times I_{n}\right\}\\
\overline \tau&:= \inf\left\{t \ge 0: (x,y+B_t) \in \bigcup_{n: I_n\cap (y-\eta, y+\eta)=\emptyset}  [b_n,\infty)\times \{y_{n}\} \right\}. 
\end{align} 
On the set $G$ we then have $\underline \tau \leq \tau_\cl\leq \tau_\op\leq \overline \tau$. Choosing the partition sufficiently fine, and $\delta$ sufficiently small, we obtain $\P(\overline \tau-\underline \tau >\eps) <\eps$, proving the assertion.
\end{proof}

\begin{remark}\label{rem:leftmonotone}
Define $\Xi:=\{(x,y):\exists (g,t)\in\Gamma, g(0)=x,g(t)=y\}$. By the previous theorem, this is precisely a supporting set of the martingale coupling $(B_0,B_{\tau_{BJ}})$. Moreover, $\tau_{BJ}$ being the hitting time of a right barrier $\mathcal R$ (i.e.\ $(d,y)\in\mathcal R, d'>d \implies (d',x)\in\mathcal R$) in the $(B_t-B_0,B_t)$-phase space it immediately follows---one can directly read it off from Figure \ref{fig:rightborder}---that $\Xi$ is left monotone (as defined in \cite{BeJu16}), i.e.\ if $(x_1,y_1),(x_1,y_2),(x_2,y') \in \Xi$ such that $x_1 < x_2$ and $y_1<y_2$ then $y'\notin(y_1,y_2)$.
\end{remark}

\begin{remark}
 Note that to deduce the left monotone structure of an optimal martingale coupling the assumption of $\mu$ not charging atoms is not needed. In fact, it is sufficient to identify the stop-go pairs $\SG$ as in  \eqref{eq:SGSpMi} since observing a path $g$ being stopped at $g(t)=y'>g(0)=x_2$ no path starting at $x_1<x_2$ can cross the level $y'$ not being stopped without violating \eqref{eq:SGSpMi}. A similar remark applies to the embedding minimizing $\pm|x-y|$ below.
\end{remark}

\begin{remark}\label{rem:leftbarrier}
 Assuming in Proposition \ref{prop:SpMi} $c_{xyy}>0$ we can run the very same proof to show the existence of a unique solution $\tau_{BJ}'$ of SEP minimizing $E[c(B_0,B_\tau)]$ and which is of the form $\tau_{BJ}' = \inf\{t > 0: B_t-B_0<\psi'(B_t) \}$ a.s., for some measurable function $\psi'$. This solution corresponds to the hitting time of a left barrier $\mathcal R'$ (i.e.\ $(d,y)\in\mathcal R', d'<d \implies (d',x)\in\mathcal R'$)
\end{remark}

Similarly we can prove the well-known properties for the optimizers in the so-called forward start straddle problem:

\begin{proposition}
Assume that $\mu(\{x\})=0$ for all $x\in\R.$ There exists a stopping time $\tau_{HN}$ which minimizes
\[\E[-|B_\tau-B_0|]\]
over all solutions to (SEP) and which is of the form 
$\tau_{HN} = \inf\{t > 0: B_t-B_0 \notin (\psi_1(B_t),\psi_2(B_t))\}$ a.s.,
for some measurable functions  $\psi_1\leq 0 \leq \psi_2$.
\end{proposition}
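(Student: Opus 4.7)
The plan is to mirror the proof of Proposition \ref{prop:SpMi} with $\gamma(g,t):=-|g(t)-g(0)|$. The standing assumption that $\nu$ has second moment bounds $\E[|B_\tau-B_0|]$ uniformly over uniformly integrable embeddings, so \eqref{eq:OptSEP} is well posed; combined with lower semicontinuity of $\gamma$ this yields a minimizer $\hat\tau$. Theorem \ref{GlobalLocal3} produces a $\gamma$-monotone Borel set $\Gamma\subset S$ with $((B_t)_{t\le\hat\tau},\hat\tau)\in\Gamma$ $\P$-a.s.

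The central computation is the identification of the stop-go pairs. Writing $\phi_\sigma(x):=\E[|x+B_\sigma|]-|x|$, the stop-go inequality at $f(s)=g(t)$ reduces to $\phi_\sigma(g(t)-g(0))>\phi_\sigma(f(s)-f(0))$. Using $\E[B_\sigma]=0$ and the triangle inequality, $\phi_\sigma$ is non-negative, symmetric, and non-increasing in $|x|$; and because $0<\E[\sigma]<\infty$ forces both $\P(B_\sigma>0)>0$ and $\P(B_\sigma<0)>0$, the monotonicity becomes strict on the relevant range. This yields the target inclusion
\[ \SG\supseteq\bigl\{((f,s),(g,t))\in S\times S:\ f(s)=g(t),\ |f(0)-f(s)|>|g(0)-g(t)|\bigr\},\]
which geometrically says: a path that has travelled further from its starting point should be stopped in preference to one that has travelled less.

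Given this, define the barrier sets
\begin{align*}
\mathcal R_\op&:=\{(d,y):\exists(g,t)\in\Gamma,\ g(t)=y,\ |d|>|g(t)-g(0)|\},\\
\mathcal R_\cl&:=\{(d,y):\exists(g,t)\in\Gamma,\ g(t)=y,\ |d|\ge|g(t)-g(0)|\}.
\end{align*}
Both are symmetric in $d$, so each decomposes as $\{d\ge m_\bullet(y)\}\cup\{d\le-m_\bullet(y)\}$ for a function $m_\bullet\ge 0$. The argument of Proposition \ref{prop:SpMi} then applies almost verbatim to show $\tau_\cl\le\hat\tau\le\tau_\op$ a.s., where $\tau_\op,\tau_\cl$ are the hitting times of $\mathcal R_\op,\mathcal R_\cl$ by $(B_t-B_0,B_t)$. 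Applying Lemma \ref{lem:barrierequality} separately to each half-barrier (after the deformation $(d,y)\mapsto(d-y,y)$ of Proposition \ref{prop:SpMi}, and using the reflection $d\mapsto-d$ for the left half) gives $\tau_\op=\tau_\cl$ a.s., with the atomlessness of $\mu$ supplying the required continuity of the first marginal. Setting $\psi_2(y):=m_\cl(y)\ge 0$ and $\psi_1(y):=-m_\cl(y)\le 0$ (modified on a null set to be Borel measurable, exactly as in Proposition \ref{prop:SpMi}) yields the desired form of $\tau_{HN}$.

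The principal technical obstacle is strictness of $\phi_\sigma$ for \emph{every} admissible $\sigma$: for $\sigma$ an exit time of a bounded interval, $\phi_\sigma$ is constant outside that interval, so the stop-go inequality as stated can hold only on a subset of the claimed pairs. This can be handled either by retaining only those pairs for which strict inequality holds for every $\sigma$ (these still force the two-sided barrier geometry, since it suffices to compare a candidate continuing path with stopped paths whose displacement is strictly larger) or by invoking the secondary monotonicity part of Theorem \ref{GlobalLocal3} against a strictly convex auxiliary cost $\tilde\gamma$, thereby upgrading weak to strict inequalities through $\mathsf{SG}_2$. The rest of the derivation is unchanged.
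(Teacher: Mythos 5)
Your overall plan (invoke the monotonicity principle, identify stop-go pairs, read off a two-sided barrier in the $(B_t-B_0,B_t)$ phase space, close the gap between $\tau_\cl$ and $\tau_\op$ via Lemma \ref{lem:barrierequality}) is the right one and matches the paper's. You also correctly anticipate that the primary stop-go inequality is only weak and must be upgraded via the secondary cost $\tilde\gamma=|f(s)-f(0)|^3$, which is exactly what the paper does. But the concrete set of stop-go pairs you claim is too large, and this propagates into a false conclusion about the barrier.

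You claim $\SG\supseteq\{f(s)=g(t),\,|f(s)-f(0)|>|g(t)-g(0)|\}$, i.e.\ you include pairs where the two increments have \emph{opposite} signs. The paper only claims the ``same-side'' inclusion $\SG_2\supseteq\{g(t)=f(s),\,0<(g(0)-f(0))(g(t)-g(0))\}$, and this is not an accident. Writing $\phi_\sigma(x)=\E[|x+B_\sigma|]-|x|$, one computes $\phi_\sigma(x)=2\E[(-x-B_\sigma)^+]$ for $x>0$ and $\phi_\sigma(x)=2\E[(x+B_\sigma)^+]$ for $x<0$: the function is unimodal (increasing on $(-\infty,0]$, decreasing on $[0,\infty)$), but it is \emph{not} symmetric unless $B_\sigma$ is, and an admissible $\sigma$ need not produce a symmetric $B_\sigma$. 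Take $\sigma$ the exit time of $(-10,1)$, so $B_\sigma\in\{-10,1\}$ with probabilities $\{1/11,10/11\}$, and the opposite-side pair with $d_f=f(s)-f(0)=5$, $d_g=g(t)-g(0)=-0.5$. Then $\phi_\sigma(5)=\phi_\sigma(-0.5)=10/11$, so the primary inequality holds only with equality; but $\tilde\phi_\sigma(x)=\E[|x+B_\sigma|^3]-|x|^3$ gives $\tilde\phi_\sigma(5)\approx 82.7<105.2\approx\tilde\phi_\sigma(-0.5)$, so the secondary inequality \eqref{BPIneq3} fails. Hence this pair is in neither $\SG$ nor $\SG_2$, and your inclusion is false. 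The secondary-cost fix you mention handles the non-strictness for same-side pairs (where it is exactly the paper's argument) but cannot rescue opposite-side pairs.

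This matters because your barrier sets $\mathcal R_{\op/\cl}$ are defined via the sign-blind condition $|d|>|g(t)-g(0)|$ and are therefore symmetric in $d$, forcing $\psi_1=-\psi_2$. That is a strictly stronger statement than the proposition, which only asserts $\psi_1\le 0\le\psi_2$, and a symmetric two-sided barrier is generically not what the Hobson--Neuberger embedding looks like. The correct construction uses only same-side stop-go pairs: for $d>0$ one rules out a continuing path through $(d,y)$ when there is a stopped path at $(d',y)$ with $0<d'<d$, giving $\psi_2$; separately, for $d<0$ one gets $\psi_1$. The two halves are built independently and need not be mirror images. Once you restrict to the paper's same-side set and drop the symmetry, the rest of your argument (separate application of Lemma \ref{lem:barrierequality} to each half-barrier after the deformation, using atomlessness of $\mu$) goes through.
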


\begin{proof}
The proof runs along similar lines as for Proposition \ref{prop:SpMi} once we identified the secondary stop-go pairs. For simplicity we assume that $\nu$ has finite third moment.
Consider the functionals
$\gamma((f,s)) = -|f(s)-f(0)|$ as well as $\tilde{\gamma}((f,s)) =|f(s)-f(0)|^3$.

We claim that
\begin{equation}
\SG_2 \supseteq \{((f,s),(g,t)) \in S \times S : g(t) = f(s),  0 < (g(0)-f(0))(g(t)-g(0)) \}.
\end{equation}

Fix some $y \in \R$ and a stopping time $\sigma$ with positive and finite expectation and observe that
\[ x \mapsto \E[|y+B_{\sigma}-x|] - |y-x| \]
is monotonically increasing on $(-\infty,y]$ and monotonically decreasing
on $(y,+\infty)$. 
Pick some pair $((f,s),(g,t)) \in S\times S$ satisfying $f(0) < g(0) < g(t)=f(s)$. Then,
\begin{align*}
\E[|f(s)+B_\sigma-&f(0)|] + |g(t) - g(0)| \\
 &\leq |f(s)-f(0)| + \E[|g(t)+B_\sigma-g(0)|],
\end{align*}
with equality iff $B_\sigma\geq -(g(t)-g(0)).$ 
Then we have $|g(t)+B_\sigma-g(0)|^3=(g(t)+B_\sigma-g(0))^3$ as well as $|f(s)+B_\sigma-f(0)|^3=(f(s)+B_\sigma-f(0))^3$. Hence, by strict concavity of $-3(y-x)^2=\partial_x |x-y|^3$ on the set $\{(x,y):x<y\}$ we have
\begin{align*}
 \E[(f(s)+B_\sigma&-f(0))^3] + (g(t) - g(0))^3 \\
 &> (f(s)-f(0))^3 +\E[(g(t)+B_\sigma-g(0))^3],
\end{align*}
showing that $((f,s),(g,t))\in\SG_2$. An almost identical analysis shows the claim for the case  $f(s)=g(t) < g(0) < f(0)$ finishing the proof.
\end{proof}

\begin{figure}
\begin{tikzpicture}[scale=0.4]
\draw[->] (-6,0) -- (6,0) node[right] {$B_t-B_0$};
\draw[->,thick] (0,-6) -- (0,6) node[above] {$B_t$};

\draw [smooth,variable=\y,domain=-5:5.5] plot ( {3.5+0.01*\y-0.05*\y*\y+0.0025*\y*\y*\y+0.00125*\y*\y*\y*\y+0.00003125*\y*\y*\y*\y*\y-0.000015625*\y*\y*\y*\y*\y*\y},{\y});
\draw [smooth,variable=\y,domain=-5:5.5] plot ( {-2.8+0.01*\y-0.05*\y*\y+0.0025*\y*\y*\y+0.00125*\y*\y*\y*\y+0.00003125*\y*\y*\y*\y*\y-0.000015625*\y*\y*\y*\y*\y*\y},{\y});

\fill [pattern = horizontal lines] (5,-5) -- plot [smooth,variable=\y,domain=-5:5.5] ( {3.6+0.01*\y-0.05*\y*\y+0.0025*\y*\y*\y+0.00125*\y*\y*\y*\y+0.00003125*\y*\y*\y*\y*\y-0.000015625*\y*\y*\y*\y*\y*\y},{\y}) -- (5,5.5) -- cycle;

\fill [pattern = horizontal lines] (-5,-5) -- plot [smooth,variable=\y,domain=-5:5.5] ( {-2.9+0.01*\y-0.05*\y*\y+0.0025*\y*\y*\y+0.00125*\y*\y*\y*\y+0.00003125*\y*\y*\y*\y*\y-0.000015625*\y*\y*\y*\y*\y*\y},{\y}) -- (-5,5.5) -- cycle;

\draw[<->,color=blue,thick] (-3.9,-4.9) -- (3.3,2.3); %
\draw[<->,color=orange,thick] (-3.1,-2.1) -- (3.2,4.2); %
\draw[<-,color=green,thick] (-2.8,0.2) -- (2.5,5.5); %
\end{tikzpicture}
\caption{The barrier solution optimizing the SEP for the cost functional $-|B_t-B_0|$ in the phase space
$(B_t-B_0,B_t)$.}\label{fig:neubergerbarrier}
\end{figure}
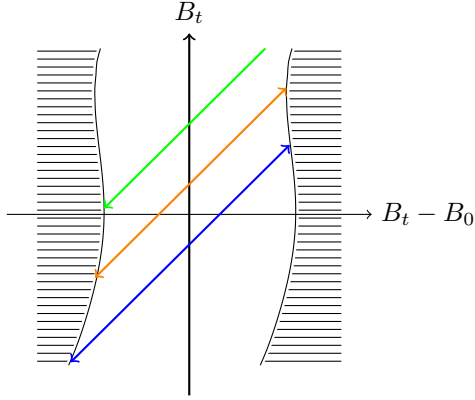

Looking at Figure \ref{fig:neubergerbarrier} one directly sees that the martingale coupling $(B_0,B_{\tau_{HN}})$ is concentrated on the graph of two non-decreasing functions.

Furthermore, changing a few signs at the appropriate spaces in the last proof we directly get:

\begin{proposition}
Suppose the supports of the marginal measures $\mu$ and $\nu$ are disjoint and $\mu(\{x\})=0$ for all $x\in\R$. 
There exists a stopping time $\tau_{HK}$ which minimizes
\[\E[|B_\tau-B_0|] \]
over all solutions to (SEP) and which is of the form
$\tau_{HK} = \inf\{t > 0 : B_t - B_0 \in (\psi_1(B_t),\psi_2(B_t))$ a.s.,
for some measurable functions  $\psi_1 \leq 0 \leq \psi_2$.
Suppose additionally the existence of an interval $I$ such that $\mu(I)=\nu(I^c)=1$ (cf.\ \cite[Dispersion Assumption 2.1]{HoKl15}). Then this
is the unique solution to (SEP) of this form. 
\end{proposition}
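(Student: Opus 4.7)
The plan is to mimic the proof of the preceding proposition with suitable sign changes, working with the primary cost functional $\gamma((f,s)) := |f(s)-f(0)|$ and the secondary cost $\tilde\gamma((f,s)) := -|f(s)-f(0)|^3$. As before, I would assume for simplicity that $\nu$ has a finite third moment, which by convex order controls the third moment of $\mu$ as well and ensures that both \eqref{eq:OptSEP} and \eqref{eq:OptSEP2} are well posed. A standard compactness argument then produces a stopping time $\hat\tau$ that minimizes the primary cost and, among primary optimizers, the secondary cost; Theorem \ref{GlobalLocal3} yields a $\tilde\gamma|\gamma$-monotone Borel set $\Gamma\subset S$ supporting $((B_s)_{s\le\hat\tau},\hat\tau)$.

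The crucial claim is
\begin{equation*}
\SG_2 \supseteq \bigl\{((f,s),(g,t))\in S\times S : f(s)=g(t),\ |f(s)-f(0)| < |g(t)-g(0)|\bigr\}.
\end{equation*}
To verify it, fix $y\in\R$ and a stopping time $\sigma$ with $0<\E[\sigma]<\infty$ and set $\phi(x):=\E[|y+B_\sigma-x|] - |y-x|$. Since $B_\sigma$ is symmetric, $\phi$ is symmetric about $y$, and the derivative $\partial_x\phi(x) = 2\P(B_\sigma < x-y) \ge 0$ for $x<y$ shows that $\phi$ is non-increasing in $|x-y|$, with equality $\phi(f(0))=\phi(g(0))$ (under $|f(0)-y| < |g(0)-y|$) occurring only when both $f(0),g(0)$ lie outside the essential support of $y+B_\sigma$. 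In that borderline case the secondary auxiliary $\tilde\phi(x):=\E[|y+B_\sigma-x|^3] - |y-x|^3$ reduces to $3(y-x)\E[\sigma]+\E[B_\sigma^3]$ on the left of $y$ and to $3(x-y)\E[\sigma]-\E[B_\sigma^3]$ on the right of $y$, each strictly monotone in $|x-y|$, which supplies the strict secondary inequality required for $\SG_2$.

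With the $\SG_2$ identification in hand, define
\begin{align*}
\mathcal R_\op &:= \{(d,y) : \exists (g,t)\in\Gamma,\ g(t)=y,\ |d| < |g(t)-g(0)|\},\\
\mathcal R_\cl &:= \{(d,y) : \exists (g,t)\in\Gamma,\ g(t)=y,\ |d| \le |g(t)-g(0)|\},
\end{align*}
and set $D(y):=\sup\{|g(t)-g(0)|:(g,t)\in\Gamma,\ g(t)=y\}$ so that the $y$-section of $\mathcal R_\op$ is the open interval $(-D(y),D(y))$ about $0$; thus $\mathcal R_\op=\{(d,y):\psi_1(y)<d<\psi_2(y)\}$ with $\psi_1:=-D\le 0\le D=:\psi_2$. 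Arguing exactly as in the proof of Proposition \ref{prop:SpMi}, the $\tilde\gamma|\gamma$-monotonicity of $\Gamma$ combined with the $\SG_2$ identification forces $\tau_\cl\le\hat\tau\le\tau_\op$ almost surely, where $\tau_\op,\tau_\cl$ are the hitting times of $\mathcal R_\op,\mathcal R_\cl$ by $(B_t-B_0,B_t)$. Passing to the coordinates $(d-y,y)$ verticalizes each side of the strip; two applications of Lemma \ref{lem:barrierequality} (one to each of the two one-sided barriers bounding the strip) then give $\tau_\op=\tau_\cl$ almost surely, yielding the claimed barrier form with $\psi_1\le 0\le\psi_2$ measurable after the standard Borel modification of the analytic function $D$.

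For uniqueness under the additional dispersion assumption $\mu(I)=\nu(I^c)=1$, the plan is to adapt the Loynes/Root argument in the spirit of \cite{BeHeTo15}: given two stopping times $\tau',\tau''$ of the prescribed form both embedding $\nu$, the dispersion assumption ensures that every Brownian path must exit $I$ before being stopped, so the two strip barriers $(\psi_1',\psi_2')$ and $(\psi_1'',\psi_2'')$ can be compared side-by-side, and a barrier-intersection argument combined with the strict $\SG_2$ property forces them to coincide. I expect this uniqueness step to be the principal obstacle: while existence and the barrier form transfer almost verbatim from the proof of Proposition \ref{prop:SpMi} with $\phi$ symmetric about $y$ playing the role that strict concavity played there, the Loynes argument genuinely requires the dispersion assumption in order to rule out crossings between the two candidate barriers.
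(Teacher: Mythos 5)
Your overall plan — flip the signs in the proof of the $\tau_{HN}$ proposition, take $\gamma=|f(s)-f(0)|$ and $\tilde\gamma=-|f(s)-f(0)|^3$, deduce a two-sided barrier, apply Lemma~\ref{lem:barrierequality} once on each side, and defer uniqueness to a Loynes-type argument — is the paper's approach, and most of the structure you describe would go through. However, the $\SG_2$ inclusion you assert is \emph{false}, and since everything downstream in your barrier construction is built on it, this is a genuine gap.

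You claim
\[
\SG_2 \supseteq \bigl\{((f,s),(g,t)) : f(s)=g(t),\ |f(s)-f(0)| < |g(t)-g(0)|\bigr\},
\]
which, in contrast to the set used by the paper for $\tau_{HN}$, includes pairs where $f(0)$ and $g(0)$ lie on \emph{opposite} sides of $y:=f(s)=g(t)$. These cross-side pairs are in general not secondary stop-go pairs. Take $f(0)=y-1$, $g(0)=y+2$ and let $\sigma$ be the exit time of $B$ from $(-\eps,2)$ for small $\eps>0$, so $B_\sigma\in\{-\eps,2\}$, $\P(B_\sigma=2)=\eps/(2+\eps)$, and $\E[\sigma]=\E[B_\sigma^2]=2\eps$. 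Since $B_\sigma\in[-1,2]$ a.s., the primary inequality holds with equality (both $\phi$-values vanish). For the secondary, one computes $\E[B_\sigma^3]=(8\eps-2\eps^3)/(2+\eps)$; the required strict inequality $\tilde\phi(f(0))>\tilde\phi(g(0))$ reduces to $3\E[\sigma]>2\E[B_\sigma^3]$, i.e.\ $6\eps > 2(8\eps-2\eps^3)/(2+\eps)$, which \emph{fails} for small $\eps$ (the left side is $\approx 6\eps$, the right is $\approx 8\eps$). So $((f,s),(g,t))\notin\SG_2$. Your claim that equality forces both $f(0),g(0)$ outside the essential support of $y+B_\sigma$ is also not correct: the equality condition is one-sided (e.g.\ $B_\sigma\geq-(y-f(0))$), exactly as the paper notes for $\tau_{HN}$, and it does not control $\E[B_\sigma^3]$.

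This also breaks your barrier construction, not just its justification: if the larger set were truly in $\SG_2$, the optimal barrier would be forced to be symmetric in $d$, and indeed you end up with $\psi_1=-D=-\psi_2$, which is strictly stronger than the asserted $\psi_1\leq 0\leq\psi_2$ and is false in general (the optimal strip need not be symmetric about $0$). The correct route, obtained by ``changing a few signs'' in the proof of the $\tau_{HN}$ proposition as the paper intends, uses only the same-side set
\[
\SG_2 \supseteq \bigl\{((f,s),(g,t)) : f(s)=g(t),\ 0<(f(0)-g(0))(f(s)-f(0))\bigr\},
\]
with the strict secondary inequality coming from the strict concavity/convexity of $\partial_x|x-y|^3$ on each half-line, exactly as in the $\tau_{HN}$ proof. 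One then defines $\mathcal R_{\op/\cl}$ separately for $d>0$ and $d<0$, which produces two \emph{independent} barriers $\psi_1$ and $\psi_2$. Your idea of applying Lemma~\ref{lem:barrierequality} twice, once per side, is then correct, and your outline of the Loynes-type uniqueness step under the dispersion assumption (splitting the barrier into upper and lower parts that do not interfere) matches the paper's remark.
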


\begin{figure}
\begin{tikzpicture}[scale=0.4]
\draw[->] (-6,0) -- (6,0) node[right] {$B_t-B_0$};
\draw[->,thick] (0,-6) -- (0,6) node[above] {$B_t$};
\draw [smooth,variable=\y,domain=2.05:5.7] plot ( {(\y - 2)^(3/2)},{\y});
\draw [smooth,variable=\y,domain=-5:-2.05] plot ( {-abs(\y + 2)^2},{\y});

\fill [pattern = horizontal lines] (0.1,5.7) -- plot [smooth,variable=\y,domain=2.1:5.7] ({(\y - 2)^(3/2)-0.1},{\y}) -- cycle;
\fill [pattern = horizontal lines] (-0.1,-5) -- plot [smooth,variable=\y,domain=-5:-2.1] ({-abs(\y + 2)^2+0.1},{\y}) -- cycle;

\draw[<->,color=blue,thick] (-2.65,-3.65) -- (6.5,5.5); %
\draw[<->,color=orange,thick] (-5.25,-4.25) -- (3.2,4.2); %
\draw[fill,color=green] (0,3) circle (3pt);
\end{tikzpicture}
\caption{The barrier solution for the cost functional $|B_t - B_0|$ in the phase space $(B_t-B_0,B_t)$.}
\label{fig:klimmekbarrier}
\end{figure}
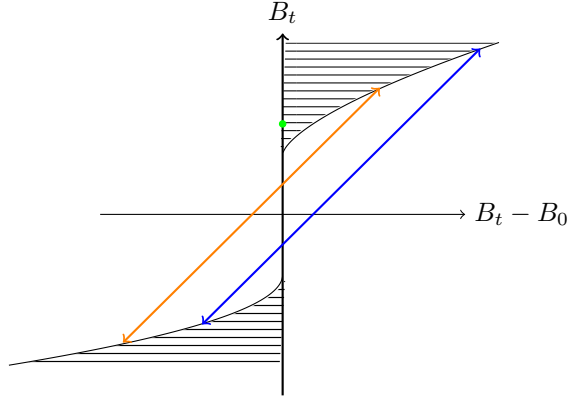

Looking at Figure \ref{fig:klimmekbarrier} we see that the martingale coupling $(B_0,B_{\tau_{HK}})$ is concentrated on the graph of two non-increasing functions.

\begin{remark}
 The last proposition remains true also if the marginals are not disjoint but the full Dispersion Assumption of \cite{HoKl15} holds. In fact, it then follows from the structure of $\SSG$ that the common mass $\mu\wedge \nu$ stays where it is. This is similar to the Rost embedding with general starting law, e.g.\ \cite[Theorem 2.4]{BeCoHu16}.

 Furthermore the result holds true for the more general class of cost functions $c(x,y) = |x-y|^p$ with $0 < p \leq 1$. Writing
 up the condition for the stop-go pairs shows that the stop-go pairs agree with the ones for $|x-y|$.
\end{remark}

\begin{remark}
The proof of uniqueness of the solution under the strong dispersion assumption $\mu(I) = \nu(I^c) = 1$ is similar to Loynes' argument for hitting times of a single barrier. Indeed, if $I=[a,b]$ then there is an 'upper' part of the barrier responsible for embedding $\nu_{\upharpoonright [b,\infty)}$ and a 'lower' part of the barrier responsible for embedding $\nu_{\upharpoonright (-\infty,a]}$ which do not interfere with each other. On each of these barrier one can run Loynes' argument.
\end{remark}

\section{Monotone Transformations}\label{sec:trafo}

We propose a class of transformations of cost functions that will in particular
comprise the transformation in \cite{CaLaMa16} and linear transformations from
$h(x-y)$ to $h(x+y)$. The idea is that this allows us to derive monotone sets for the
transformed cost function as transformations of monotone sets of the original
cost function. To prove this monotonicity, we need an accompanying transformation of
measures that preserves competitors.

\subsection{Competitor-Preserving Transformations}

Let us start with the definition of the types of transformations we are interested in (where $\cE(X)$ denotes the finite
measures on a space $X$):

\begin{definition}
We call a bijective transformation $\tau: \cE(I \times J) \to \cE(I' \times J')$ 
\emph{competitor preserving}, if
for given competitors $\alpha'$ and $\beta'$ concentrated on $I' \times J'$ we have that 
$\tau^{-1}(\alpha')$ and $\tau^{-1}(\beta')$ are also competitors.
\end{definition}

We will identify competitor preserving $\tau$ of a specific form:
\begin{definition}\label{def:monotonetrafo}
Let $T: I \times J \to I' \times J'$ be a bijective map and $h: I \times J \to \R^+$. We say the pair $(T,h)$ is a  \emph{monotonicity preserving transformation} if
 the induced transformation $\tau: \cE(I \times J) \to \cE(I' \times J')$ 
defined via
\[\int g \, d\tau(\alpha) = \int (g \circ T) \cdot h \, d\alpha\]
for all bounded continuous $g$ is competitor preserving.
\end{definition}

The inverse transformation of $\tau$ from Definition \ref{def:monotonetrafo} is then
characterized by
\[ \int g \, d\tau^{-1}(\alpha') = \int \frac{g \circ T^{-1}}{h \circ T^{-1}} \, d\alpha'\]
for all bounded continuous functions $g$. This fact will be used in the proof of the following theorem,
characterizing the competitor-preserving transformations $(T,h)$.
\begin{theorem}
\label{thm:compensatablerep}
Let $T(x,y) = (s(x,y),t(x,y))$ be a bijective map and $h: I \times J \to (0,\infty)$.
Then we must have that $s$ is constant in $y$ and $t$ and $h$ are constant in $x$. Furthermore $t$ and $h$ must be of
one of the following forms:
\begin{enumerate}[(1)]
\item $h$ is constant and $t$ is affine in $y$;
\item $h$ is affine in $y$ ($h(y) = c(y - b)$) and $t$ is of the form $t(y) = a/(y-b)$.
\end{enumerate}
\end{theorem}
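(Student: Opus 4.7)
The plan is to progressively constrain the form of $(T,h)$ by testing the competitor-preservation property against carefully chosen two-atom competitor pairs, matching first marginals and conditional means under $\tau^{-1}$.

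First I would show that $s$ depends only on $x$. Fix $x' \in I'$ and two distinct points $y_1', y_2' \in J'$, and take the two-atom measure $\alpha' = \delta_{(x', y_1')} + \delta_{(x', y_2')}$ together with its mid-point competitor $\beta' = 2\delta_{(x', (y_1'+y_2')/2)}$. Since $\tau^{-1}$ sends competitors to competitors, the first marginals of $\tau^{-1}(\alpha')$ and $\tau^{-1}(\beta')$ on $I$ must coincide. The first marginal of $\tau^{-1}(\beta')$ is concentrated at a single point while that of $\tau^{-1}(\alpha')$ is supported on $\{\pi_I T^{-1}(x', y_1'), \pi_I T^{-1}(x', y_2')\}$, forcing these two points to coincide with each other (and with the corresponding point for $\beta'$). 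Letting $y_1', y_2'$ vary gives $\pi_I \circ T^{-1}(x', \cdot)$ constant in its second argument, which translates, via $T \circ T^{-1} = \id$, to $s(x,y)$ being independent of $y$.

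Next, with $s$ constant in $y$, I would continue with the same competitor pair and equate total masses and conditional means in the first marginals. Setting $\phi(u') := 1/h(T^{-1}(x',u'))$ and $\psi(u') := \pi_J T^{-1}(x',u')$, the mass matching yields the Jensen equation $\phi(y_1') + \phi(y_2') = 2\phi((y_1'+y_2')/2)$, forcing $\phi$ to be affine in $u'$ by (measurable) midpoint affinity; the conditional-mean matching yields $\psi\phi$ affine as well, so $\psi(u') = (Cu'+D)/(Au'+B)$ is Möbius. Undoing the change of variables shows that $t(x,\cdot)$ is Möbius in $y$ and $h(x,\cdot) = 1/(A\,t(x,y)+B)$ simplifies to an affine function of $y$. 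The two cases arise depending on whether this affine function is constant (case (1) with $t$ affine) or genuinely linear (case (2) with a zero at the same point $\beta$ as the pole of $t$).

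Finally I would argue $s : I \to I'$ is a bijection and $t$ and $h$ are independent of $x$. Injectivity of $s$ follows from a symmetric two-atom argument: if $s(x_1) = s(x_2) = s_0$ with $x_1 \neq x_2$, choose $y_i' \in t(x_i, J)$ and apply $\tau^{-1}$ to $\alpha' = \delta_{(s_0, y_1')} + \delta_{(s_0, y_2')}$ and its mid-point competitor; the former has first marginal charging both $\{x_1\}$ and $\{x_2\}$ while the latter charges only one of them, contradicting competitor preservation. Surjectivity is immediate from bijectivity of $T$. Then $t(x,\cdot) : J \to J'$ is a Möbius (or affine) bijection between the fixed intervals, and combining the requirement $h(x,\cdot) > 0$ with the coincidence of the pole of $t$ and the zero of $h$, one checks that the only representatives consistent with a bijection onto $I' \times J'$ and with the competitor-preservation constraints are the normal forms $t(y)=a/(y-b),\ h(y)=c(y-b)$ or, in case (1), $t$ the affine bijection of $J$ onto $J'$ and $h$ a true constant. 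The main obstacle is this last step: the competitor-preservation condition decouples across $x$-slices, so the cross-$x$ rigidity must be extracted from the interplay of global bijectivity onto the fixed target rectangle with the pole–zero coupling between $t$ and $h$.
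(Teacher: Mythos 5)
There is a fundamental flaw right at the start: the ``mid-point competitor'' $\beta' = 2\delta_{(x',(y_1'+y_2')/2)}$ of $\alpha' = \delta_{(x',y_1')}+\delta_{(x',y_2')}$ is not a competitor in the sense of Definition~\ref{def:monotone}. Competitors must have \emph{both} marginals in common in addition to matching conditional means, and here the second marginals ($\delta_{y_1'}+\delta_{y_2'}$ versus $2\delta_{(y_1'+y_2')/2}$) disagree. In fact there is no non-trivial competitor pair supported on a single vertical line $\{x'\}\times J'$: with only one $x$-value the second-marginal constraint already pins down the measure. Every one of your three steps rests on this invalid pair, so the argument breaks at the first step and the injectivity argument for $s$ in the last step fails for the same reason. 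The correct minimal competitor pairs necessarily involve at least two distinct $x$-values, e.g.\ the three-atom pair
\[
\alpha = \lambda \delta_{(x_1,y_1)} + (1-\lambda)\delta_{(x_1,y_2)} + \delta_{(x_2,y_\lambda)},\qquad
\beta = \lambda \delta_{(x_2,y_1)} + (1-\lambda)\delta_{(x_2,y_2)} + \delta_{(x_1,y_\lambda)}
\]
with $y_\lambda = \lambda y_1 + (1-\lambda)y_2$, which is what the paper uses throughout (Lemmas~\ref{lem:compchar}--\ref{lem:compBlindInjective} and Proposition~\ref{prop:invcomprep}).

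A second, separate gap is the cross-$x$ rigidity, which you yourself flag as ``the main obstacle'': even granting a M\"obius form of $t(x,\cdot)$ for each fixed $x$, you invoke bijectivity onto a fixed rectangle plus pole--zero coupling between $t$ and $h$, but this is not carried out. The paper handles this via the higher-order competitorblind conditions $\tilde t(x,y)^k(c(x)+d(x)y)$ for $k=2,3$ (Lemma~\ref{lem:tdependence}, proved in the appendix), which is where the actual independence of $\tilde t$ and $\tilde h$ from $x$ comes from; bijectivity alone does not suffice because the competitor condition genuinely couples across $x$-slices only through the three-atom pairs.
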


The proof of Theorem \ref{thm:compensatablerep} is structured in five preliminary results. The first characterizes functions that cannot distinguish between competitors. The next three describe restrictions on the class of bijective monotonicity preserving transformations arising from their
definition. After that we will  prove
a representation of the inverse transformation of $(T,h)$ in Proposition \ref{prop:invcomprep} and derive the representation of $T$ from that.

\begin{lemma}\label{lem:compchar}
Suppose $f: I \times J \to \R$ for $I$ and $J$ (possibly unbounded) intervals in $\R$ is such that
$\alpha(f) = \beta(f)$ whenever $\alpha$ and $\beta$ are competitors. Then we can find a representation of $f$ of the
form $f(x,y) = \phi(x) + \psi(y) + k(x)y$.
\end{lemma}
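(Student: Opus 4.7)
The plan is to produce, for each pair $x_1, x_2 \in I$, an explicit family of competitor pairs that forces the ``slice difference'' $g_{x_1,x_2}(y):=f(x_1,y)-f(x_2,y)$ to be affine in $y$; once that is known, fixing a reference point $x_0\in I$ gives the claimed decomposition immediately.

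For the construction, fix $x_1, x_2\in I$, points $y_1, y_2 \in J$, and positive weights $a, b$, and set $y_3 = (ay_1 + by_2)/(a+b)$, which lies in the interval $J$. Define
\[
\alpha := a\,\delta_{(x_1,y_1)} + b\,\delta_{(x_1,y_2)} + (a+b)\,\delta_{(x_2,y_3)},\qquad
\beta := a\,\delta_{(x_2,y_1)} + b\,\delta_{(x_2,y_2)} + (a+b)\,\delta_{(x_1,y_3)}.
\]
A direct check shows that $\alpha$ and $\beta$ share first marginal $(a+b)(\delta_{x_1}+\delta_{x_2})$, second marginal $a\delta_{y_1}+b\delta_{y_2}+(a+b)\delta_{y_3}$, and conditional barycenter $y_3$ both at $x_1$ and at $x_2$; hence they are competitors in the sense of Definition~\ref{def:monotone}.

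The hypothesis $\alpha(f) = \beta(f)$ collapses, after cancellation, to the single identity
\[
a\, g_{x_1,x_2}(y_1) + b\, g_{x_1,x_2}(y_2) \;=\; (a+b)\, g_{x_1,x_2}(y_3).
\]
Writing $\lambda := a/(a+b) \in (0,1)$, this says that $g_{x_1,x_2}$ commutes with every convex combination of two points of $J$; on an interval this is precisely the definition of an affine function, and no regularity on $f$ is required to conclude it. Hence $g_{x_1,x_2}(y) = K(x_1,x_2)\,y + L(x_1,x_2)$ for some $K,L$.

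To finish, pick any $x_0 \in I$ and define $k(x) := K(x, x_0)$, $\phi(x) := L(x, x_0)$, and $\psi(y) := f(x_0, y)$. Then $f(x,y) - \psi(y) = g_{x,x_0}(y) = k(x)\,y + \phi(x)$, giving the claimed representation $f(x,y) = \phi(x) + \psi(y) + k(x)y$. The only subtle point is the second step: one must allow arbitrary weights $a, b > 0$ in the competitor construction rather than only $a=b$, as the symmetric case yields only midpoint-convexity and would require measurability to upgrade to affinity. Using general weights bypasses this issue entirely.
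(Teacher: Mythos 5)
Your proposal is correct and takes essentially the same approach as the paper: the paper uses the same three-atom competitor pair with general convex weight $\lambda\in(0,1)$ (your $a,b>0$ after normalization by $a+b$), derives the same identity, concludes affinity of the slice difference, and sets $\psi(y)=f(x_0,y)$ to finish. Your closing remark about needing general weights rather than only the midpoint case is a valid observation, but it does not distinguish your argument from the paper's, which already uses arbitrary $\lambda\in(0,1)$.
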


\begin{proof}
Consider the competitors 
\begin{align*} 
\alpha &:= \lambda \delta_{(x_1,y_1)} + (1-\lambda) \delta_{(x_1,y_2)} + \delta_{(x_2,y_\lambda)} \\
\beta &:= \lambda \delta_{(x_2,y_1)} + (1-\lambda) \delta_{(x_2,y_2)} + \delta_{(x_1,y_\lambda)}
\end{align*}
for $y_1 < y_\lambda < y_2 \in J$ and $x_1,x_2 \in I$ arbitrary and $\lambda \in (0,1)$ such that 
$y_\lambda = \lambda y_1 + (1-\lambda)y_2$. Then by assumption we must have $\alpha(f) = \beta(f)$
which amounts to
\begin{align*} 
\lambda f(x_1,y_1) + &(1-\lambda) f(x_1,y_2) - f(x_1,y_\lambda) \\
&= \lambda f(x_2,y_1) + (1-\lambda) f(x_2,y_2) - f(x_2,y_\lambda).
\end{align*}
As $y_1 < y_\lambda < y_2$ were arbitrary, the non-linear shape of $f$ along $y$ does not depend on $x$ and we can set
$g(x,y) := f(x,y) - \psi(y)$ for $\psi(y) := f(x_0,y)$ and $x_0 \in I$ arbitrary but fixed.
For any $x \in I$ we then have that $g(x,y)$ is linear in $y$ and the result follows.
\end{proof}

Any function $f$ satisfying the assumptions of the last Lemma will be called \emph{competitorblind.}

Observe that two measures $\alpha$ and $\beta$ are competitors if and only if they cannot distinguish
between competitorblind functions. More concretely, they are competitors if for any 
$g \in \mathcal{C}_b(\R)$ we have
\begin{align*}
\int g(x) d\alpha(x,y) &= \int g(x) d\beta(x,y) &\quad (C1)\\
\int g(y) d\alpha(x,y) &= \int g(y) d\beta(x,y) &\quad (C2)\\
\int g(x)(y-x) d\alpha(x,y) &= \int g(x)(y-x) d\beta(x,y) &\quad (C3)
\end{align*}
and sums of these functions (and their limits) are the only functions such that equality
holds for arbitrary competitors.

\begin{lemma}
\label{lem:hrestrictions}
Let $\tilde s: I' \to I$ and $\tilde t: J' \to J$ be non-constant functions and $\tilde h: I' \times J' \to (0,\infty)$ 
be a competitorblind function.
\begin{enumerate}[(i)]
\item If $g(\tilde s(x))\tilde h(x,y)$ is competitorblind for any $g \in \mathcal{C}_b(\R)$, then $\tilde h$ is linear in $y$.
\item If $g(\tilde t(y))\tilde h(x,y)$ is competitorblind for any $g \in \mathcal{C}_b(\R)$, then $\tilde h$ is constant in $x$.
\end{enumerate}
\end{lemma}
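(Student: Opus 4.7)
My plan is to use a mixed-difference characterization of competitorblind functions that falls out of the proof of Lemma \ref{lem:compchar}: a function $u$ on $I' \times J'$ is competitorblind if and only if for every $y_1, y_2 \in J'$ and every $\lambda \in (0,1)$ with $y_\lambda := \lambda y_1 + (1-\lambda) y_2$, the mixed difference
\[\Delta u(x) := \lambda u(x, y_1) + (1-\lambda) u(x, y_2) - u(x, y_\lambda)\]
is independent of $x \in I'$. Applying Lemma \ref{lem:compchar} to $\tilde h$ itself yields the decomposition $\tilde h(x,y) = \phi(x) + \psi(y) + k(x) y$. The pieces $\phi(x)$ and $k(x) y$ are affine in $y$ and hence contribute zero to $\Delta u$; this observation will be used repeatedly below.

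For (i) I would take $u(x,y) = g(\tilde s(x))\, \tilde h(x,y)$. Since $g \circ \tilde s$ depends only on $x$ it factors out of the mixed difference, giving $\Delta u(x) = g(\tilde s(x)) \cdot [\lambda \psi(y_1) + (1-\lambda) \psi(y_2) - \psi(y_\lambda)]$. For this product to be independent of $x$, either $g \circ \tilde s$ is constant or the bracket vanishes. Since $\tilde s$ is non-constant I can choose a bounded continuous $g$ that separates two distinct values of $\tilde s$, making $g \circ \tilde s$ non-constant; the bracket must then vanish for every admissible triple, which is precisely the condition that $\psi$ be affine, say $\psi(y) = \alpha + \beta y$. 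Consequently $\tilde h(x,y) = (\phi(x) + \alpha) + (\beta + k(x)) y$ is affine in $y$, as claimed.

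For (ii) I would take $u(x,y) = g(\tilde t(y))\, \tilde h(x,y)$ and set $a_i = g(\tilde t(y_i))$ for $i \in \{1, 2, \lambda\}$. Expanding and collecting the $x$-dependent terms gives $\Delta u(x) = P\phi(x) + Q\, k(x) + C$, where $C$ does not depend on $x$, $P = \lambda a_1 + (1-\lambda) a_2 - a_\lambda$, and $Q = \lambda y_1 a_1 + (1-\lambda) y_2 a_2 - y_\lambda a_\lambda$. Independence of $\Delta u(x)$ from $x$ forces $P \phi(x) + Q k(x)$ to be constant in $x$ for every admissible triple and every $g$. To conclude that both $\phi$ and $k$ are constant I need two linearly independent realisations of $(P, Q)$. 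Since $\tilde t$ is non-constant (and in the intended application continuous as a coordinate of a bijection between intervals) I can find $y_1 < y_\lambda < y_2$ at which $\tilde t$ takes three distinct values and then prescribe $a_1, a_2, a_\lambda$ arbitrarily via a bounded continuous $g$. The linear map $(a_1, a_2, a_\lambda) \mapsto (P, Q)$ has the $2\times 2$ minor
\[\det\begin{pmatrix} \lambda & -1 \\ \lambda y_1 & -y_\lambda \end{pmatrix} = \lambda(y_1 - y_\lambda) \neq 0,\]
hence is surjective onto $\R^2$. Choosing $(P, Q) = (1, 0)$ and $(0, 1)$ respectively forces $\phi$ and $k$ constant, so $\tilde h$ is constant in $x$.

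The main obstacle lies in this last step of (ii): producing two linearly independent $(P, Q)$-pairs requires $\tilde t$ to attain three distinct values at some admissible triple. This is automatic when $\tilde t$ is continuous and non-constant on an interval, but breaks down in the pathological case where $\tilde t$ takes only two values with one of them attained at a single point $y_0$, since then every realisable pair lies on the line through $(1, y_0)$. In our setting $\tilde t$ inherits enough regularity from the bijection $T$ (being a coordinate of $T^{-1}$ between intervals) to rule this out, and a careful write-up should either invoke that regularity explicitly or strengthen the hypothesis on $\tilde t$ accordingly.
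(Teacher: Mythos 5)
Your proof follows exactly the route the paper takes: apply Lemma~\ref{lem:compchar} to write $\tilde h(x,y)=\phi(x)+\psi(y)+k(x)y$, note that the $\phi(x)+k(x)y$ part vanishes in mixed differences, test competitorblindness against the three--point pair of competitors $\alpha,\beta$, and then exploit the freedom in the choice of $g$. Part (i) is identical. For part (ii) you compute the same coefficients $P=\lambda a_1+(1-\lambda)a_2-a_\lambda$ and $Q=\lambda y_1a_1+(1-\lambda)y_2a_2-y_\lambda a_\lambda$ as the paper and try to realise two independent directions in $(P,Q)$.

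What you do better than the paper is flag the gap honestly. The published proof asserts that from $\tilde t(y_1)\neq\tilde t(y_2)$ alone one can always pick $g$ making $P=0$ but $Q\neq 0$; that fails whenever $\tilde t(y_\lambda)$ is forced to coincide with one of $\tilde t(y_1),\tilde t(y_2)$, because then $(P,Q)$ sweeps out a line (slope $y_1$, $y_2$ or $y_\lambda$ depending on which two coincide). Your observation that $\tilde t$ must take \emph{three} distinct values at some admissible triple is exactly the missing hypothesis, and the pathology you name is a genuine counterexample to (ii) as stated: take $\tilde t\equiv t_2$ except at a single $y_0\in J'$ where $\tilde t(y_0)=t_1$, and $\tilde h(x,y)=\phi(x)+k(x)y+\psi(y)$ with $\phi(x)+y_0k(x)\equiv$ const but $\phi,k$ themselves non-constant (and chosen so that $\tilde h>0$); then $g(\tilde t(y))\tilde h(x,y)=g(t_2)(\phi(x)+k(x)y)$ away from $y_0$ while the value at $y_0$ is a function of $y$ only, so the hypothesis of (ii) holds but $\tilde h$ is not constant in $x$. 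Your suggested remedy --- either strengthen the hypothesis to ``$\tilde t$ attains three distinct values on some convex triple'' or invoke the extra regularity available in the application --- is the right fix, and the same caveat should be attached to the invocations of part~(ii) inside the proof of Proposition~\ref{prop:invcomprep}, where the role of $\tilde t$ is played by a slice of $\tilde s$ about which no regularity has yet been established.
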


\begin{proof}
We can write $\tilde h(x,y) = \phi(x) + \psi(y) + k(x)y$ by assumption. In the case of (i), we always have that
$g(\tilde s(x))(\phi(x) + k(x)y)$ is competitorblind. Therefore we must in particular have that
$g(\tilde s(x))\psi(y)$ is competitorblind. For arbitrary $x_1 < x_2$ and $y_1 < y_\lambda < y_2$ with $\lambda \in (0,1)$ such that
$y_\lambda = \lambda y_1 + (1-\lambda) y_2$ we then have
\[(g(\tilde s(x_1)) - g(\tilde s(x_2)))(\lambda \psi(y_1) + (1-\lambda) \psi(y_2) - \psi(y_\lambda)) = 0 \]
by choosing as above a pair of competitors
\begin{align*} 
\alpha &:= \lambda \delta_{(x_1,y_1)} + (1-\lambda) \delta_{(x_1,y_2)} + \delta_{(x_2,y_\lambda)} \\
\beta &:= \lambda \delta_{(x_2,y_1)} + (1-\lambda) \delta_{(x_2,y_2)} + \delta_{(x_1,y_\lambda)}.
\end{align*}
As $\tilde s$ was assumed to be non-constant, we can choose $x_1$ and $x_2$ such that $\tilde s(x_1) \neq \tilde s(x_2)$ and
therefore $\psi$ has to be linear in $y$ and then also $\tilde h$ has to be linear in $y$.

In the case of (ii), we always have that $g(\tilde t(y))\psi(y)$ is competitorblind and hence 
$g(\tilde t(y))(\phi(x) + k(x)y)$ must be competitorblind. As in part (i) we derive that for any
$x_1 < x_2 \in I$ and $y_1 < y_\lambda < y_2 \in J$ with $\lambda \in (0,1)$ accordingly, we have 
that
\begin{align*}
(\phi&(x_1) - \phi(x_2))[\lambda g(\tilde t(y_1)) + (1-\lambda)g(\tilde t(y_2)) - g(\tilde t(y_\lambda))] \\
&+ (k(x_1) - k(x_2))[\lambda g(\tilde t(y_1))y_1 + (1-\lambda)g(\tilde t(y_2))y_2 - g(\tilde t(y_\lambda))y_\lambda] = 0.
\end{align*}
As $\tilde t$ is non-constant, we can find $y_1$ and $y_2$ with $\tilde t(y_1) \neq \tilde t(y_2)$ and can then
choose $g$ such that $\lambda g(\tilde t(y_1)) + (1-\lambda)g(\tilde t(y_2)) - g(\tilde t(y_\lambda)) = 0$ but
$\lambda g(\tilde t(y_1))y_1 + (1-\lambda)g(\tilde t(y_2))y_2 - g(\tilde t(y_\lambda))y_\lambda \neq 0$ which yields that $k$ is a constant
function. Now choosing a function $g$ such that $\lambda g(\tilde t(y_1)) + (1-\lambda)g(\tilde t(y_2)) - g(\tilde t(y_\lambda)) \neq 0$
yields that also $\phi$ is a constant function.
\end{proof}

The proof of the following Lemma is elementary but technical and therefore deferred to the appendix.

\begin{lemma}
\label{lem:tdependence}
Assume we have a function $\tilde t(x,y) := \frac{a(x) + b(x)y}{c(x)+d(x)y}$ defined on a rectangle $I' \times J'$ such that
$\tilde h(x,y)=c(x)+d(x)y > 0$ everywhere and $\tilde t(x,y)^k(c(x) + d(x)y)$ is competitorblind for $k = 2,3$. If $\tilde t$ is non-constant in $y$ for
all $x \in I'$ then $\tilde t$ is constant in $x$. Moreover, we have for some $\bar a,\bar b,\bar c,\bar d\in\R$
\begin{itemize}
 \item either $\tilde h(x,y)=\bar dy$ and $\tilde t(x,y)=\frac{\bar a}{\bar d y}+\frac{\bar b}{\bar d}$
\item or $\tilde h(x,y)=\bar c$ and $\tilde t(x,y)=\frac{\bar a+\bar by}{\bar c}$.
\end{itemize}
\end{lemma}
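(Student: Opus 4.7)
My plan is to exploit Lemma \ref{lem:compchar}: a function is competitorblind iff it decomposes as $\phi(x)+\psi(y)+k(x)y$, or equivalently iff its $y$-nonlinear part is independent of $x$. I will apply this to $\tilde t^k \tilde h = (a+by)^k/(c+dy)^{k-1}$ for $k=2,3$ and read off successive constraints on $a,b,c,d$.

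First I would dispatch the degenerate case $d\equiv 0$. Here $\tilde t^k\tilde h=(a+by)^k/c^{k-1}$ are polynomials in $y$ with $x$-dependent coefficients; matching the $y$-nonlinear parts shows $b^2/c$ (from $k=2$) and $b^3/c^2$ (from $k=3$) are constant, whence $b$ and $c$ are constant, and then so is $a$ from the remaining coefficient in the $k=3$ expansion. This produces the second form of the lemma. Otherwise $d\not\equiv 0$; a short argument separating the polynomial and rational regimes (using that $\tilde t^2\tilde h$ has a pole in $y$ precisely where $d\neq 0$) forces $d(x)\neq 0$ throughout.

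The main computation is a partial-fraction decomposition. Setting $u=a+by$, $v=c+dy$, $p=bc-ad$ and writing $u=(bv-p)/d$, one gets
\begin{align*}
\tilde t^2 \tilde h &= \tfrac{b^2}{d^2}v - \tfrac{2bp}{d^2} + \tfrac{p^2}{d^2 v},\\
\tilde t^3 \tilde h &= \tfrac{b^3}{d^3}v - \tfrac{3b^2 p}{d^3} + \tfrac{3bp^2}{d^3 v} - \tfrac{p^3}{d^3 v^2}.
\end{align*}
Since the two pole orders at $y=-c/d$ are different, the three $y$-nonlinear contributions $p^2/(d^2 v)$, $3bp^2/(d^3 v)$ and $p^3/(d^3 v^2)$ must each be $x$-independent on their own. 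From $p^2/(d^2 v) = F(y)$, equating coefficients of $y^0$ and $y^1$ in $p^2/d^2 = F(y)(c+dy)$ yields that $c/d$ is a constant $\rho$ (or $c\equiv 0$) and $p^2/d^3$ is constant. From $p^3/(d^3 v^2)$ being $x$-independent one obtains $p^3/d^5$ constant; together with $p^2/d^3$ constant, elementary algebra (eliminating $p$) forces $d$ to be constant. Then $c=\rho d$ and $p$ are constant, the residue at $1/v$ forces $b$ constant, and finally $a=(bc-p)/d$ is constant.

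Thus both $\tilde t$ and $\tilde h$ are independent of $x$. When $d\neq 0$, a translation of the $y$-coordinate so as to place the pole of the Möbius transform at the origin normalises $\tilde h=c+dy$ to $\bar d y$, and the corresponding Möbius transform simplifies to $\tilde t(y)=\bar a/(\bar d y)+\bar b/\bar d$, yielding the first case. I expect the subtlest step to be the decoupling argument in the third paragraph that each pole-order residue must be $x$-constant separately (as opposed to merely the sum), together with the bookkeeping needed to show that $d\neq 0$ holds on the full rectangle; the rest amounts to carefully tracking which of $a,b,c,d$ are pinned down at each stage.
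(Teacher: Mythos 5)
Your proof is correct and, while structured differently from the paper's, reaches the same endpoint by a route that is arguably cleaner. The paper multiplies $\tilde t^2\tilde h=\phi+\psi+ky$ through by $c+dy$ to obtain a polynomial identity, deduces that $\psi$ must be of the form $p_{-1}y^{-1}+p_2y^2$, and then runs a four-way case analysis on $(p_{-1},p_2)$, discarding the two degenerate combinations (both zero $\Rightarrow$ degenerate M\"obius, both nonzero $\Rightarrow$ $c=d=0$) and treating the two surviving cases separately. You instead dispatch $d\equiv 0$ (the paper's $p_{-1}=0$ branch) directly and then, for $d\neq 0$, do a partial-fraction expansion in $v=c+dy$, which makes the relevant nonlinear-in-$y$ contributions visible as residues at the (single) pole; the fact that pole orders $1$ and $2$ cannot cancel then pins down $c/d$, $p^2/d^3$ and $p^3/d^5$, and a one-line elimination gives $d$ constant. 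That avoids the paper's case bookkeeping, at the cost of the slightly delicate "each residue is separately $x$-independent" step, which you correctly justify by first fixing the pole location via $k=2$ and then using linear independence of $(y+\rho)^{-1}$ and $(y+\rho)^{-2}$.

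One point worth flagging. You explicitly note that a $y$-translation is needed to reach the stated normal form $\tilde h=\bar d y$; the paper's proof, as written, takes $\psi(y)=p_{-1}y^{-1}+p_2y^2$ (pole at $y=0$) and thereby concludes $c\equiv 0$, but the argument actually only forces the pole of $\psi$ to sit at the $x$-independent point $-c/d$, not at $0$. So the "moreover" part of the lemma as stated is really a statement up to a $y$-translation (indeed Theorem~\ref{thm:compensatablerep} itself carries the free shift parameter $b$), and your treatment is the more careful one here. Everything else -- the mixing argument that forces $d$ to vanish either everywhere or nowhere, the $d\equiv 0$ case via the coefficients $b^2/c$ and $b^3/c^2$, and the final elimination $d=(p^2/d^3)^3/(p^3/d^5)^2$ -- checks out.
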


Using this Lemma we can prove the main part of Theorem \ref{thm:compensatablerep}. It is easier to prove the shape of
the inverse transform first, which will be done in Proposition \ref{prop:invcomprep}, and deduce the representation of the original transform from it.

We first prove another auxiliary result.

\begin{lemma}
\label{lem:compBlindInjective}
Let $t: I \times J \to \tilde{J}$, and $h: I \times J \to (0,\infty)$ be such
that $(g \circ t)\cdot h$ is competitorblind for all $g \in \C_b(\R)$, then
$t$ cannot be injective.
\end{lemma}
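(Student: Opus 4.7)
The plan is to assume for contradiction that $t$ is injective, and derive a contradiction from competitorblindness of $(g\circ t)\cdot h$ by pinning down one coefficient as a strictly positive value of $h$ against an otherwise-free choice of $g$.

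First, I would apply Lemma~\ref{lem:compchar} to each function $(g\circ t)\cdot h$ with $g\in\C_b(\R)$, obtaining a pointwise representation
\[ g(t(x,y))\,h(x,y) \;=\; \phi_g(x) + \psi_g(y) + k_g(x)\,y. \]
The key observation is that for any two fixed $x_0\neq x_1\in I$, the difference
\[ F_g(y) \;:=\; g(t(x_0,y))\,h(x_0,y) \;-\; g(t(x_1,y))\,h(x_1,y) \]
is affine in $y$, since the $\psi_g(y)$ contributions cancel.

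Next, I would fix three points $y_1<y_2<y_3\in J$ with $2y_2=y_1+y_3$, so that affinity of $F_g$ reads $2F_g(y_2)=F_g(y_1)+F_g(y_3)$. Expanded, this is a fixed linear combination in the six values $g(z_{ij})$, where $z_{ij}:=t(x_i,y_j)$ for $(i,j)\in\{0,1\}\times\{1,2,3\}$, with coefficients coming from $h$:
\[
2h(x_0,y_2)g(z_{02}) - 2h(x_1,y_2)g(z_{12}) - h(x_0,y_1)g(z_{01}) + h(x_1,y_1)g(z_{11}) - h(x_0,y_3)g(z_{03}) + h(x_1,y_3)g(z_{13}) = 0.
\]
This identity holds for \emph{every} $g\in\C_b(\R)$; crucially, the coefficient of $g(z_{02})$ is $2h(x_0,y_2)$, which is strictly positive.

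Finally, I would invoke injectivity of $t$: the six $z_{ij}$ are pairwise distinct, so one may pick a small continuous bump $g\in\C_b(\R)$ with $g(z_{02})=1$ and $g(z_{ij})=0$ at the other five points. Substituting into the identity collapses it to $2h(x_0,y_2)=0$, contradicting $h>0$. The only mild subtlety is that Lemma~\ref{lem:compchar} must apply to $(g\circ t)\cdot h$ without any regularity hypothesis on $g\circ t$, but the lemma is purely pointwise, so this poses no obstacle; the real content of the argument is the pigeonhole-style use of injectivity to turn six affine constraints in $g$ into six independent coefficient conditions on $h$.
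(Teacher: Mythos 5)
Your proof is correct and takes essentially the same approach as the paper: you derive the same six-point identity (after specializing to $\lambda = 1/2$) that the paper gets directly from the competitor pair, and the contradiction comes from the same place — injectivity of $t$ lets you prescribe $g$ freely at the six image points, while $h>0$ makes the resulting constraint impossible. Routing through Lemma~\ref{lem:compchar} and then using the bump at a single point rather than the paper's normalization $g(t(x_i,y_j))=1/h(x_i,y_j)$ at four points is a minor cosmetic variation, not a different argument.
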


\begin{proof}
Assume the contrary and consider the usual pair of competitors
\begin{align*} 
\alpha &:= \lambda \delta_{(x_1,y_1)} + (1-\lambda) \delta_{(x_1,y_2)} + \delta_{(x_2,y_\lambda)} \\
\beta &:= \lambda \delta_{(x_2,y_1)} + (1-\lambda) \delta_{(x_2,y_2)} + \delta_{(x_1,y_\lambda)}.
\end{align*}
for some $x_1,x_2 \in I$ and $y_1 < y_\lambda < y_2 \in J$. 
Then for arbitrary $g \in \C_b(\R)$ we must have
\begin{align*}
\lambda &g(t(x_1,y_1))h(x_1,y_1) + (1-\lambda)g(t(x_1,y_2))h(x_1,y_2) - g(t(x_1,y_\lambda))h(x_1,y_\lambda) \\
&= \lambda g(t(x_2,y_1))h(x_2,y_1) + (1-\lambda)g(t(x_2,y_2))h(x_2,y_2) - g(t(x_2,y_\lambda)) h(x_2,y_\lambda)
\end{align*}
By injectivity of $t$ we can consider a function $g$ such that $g(t(x_i,y_j)) = 1/h(x_i,y_j)$
for $i,j \in \{1,2\}$. For any such $g$ we must have
\[g(t(x_1,y_\lambda))h(x_1,y_\lambda) = g(t(x_2,y_\lambda)) h(x_2,y_\lambda).\]
Now we can easily choose values of $g$ that contradict this property.
\end{proof}

\begin{proposition}
\label{prop:invcomprep}
Let $\tilde s: I' \times J' \to I$ and $\tilde t: I' \times J' \to J$ be such that $\tilde T(x,y) = (\tilde s(x,y),\tilde t(x,y))$ maps bijectively
from $I' \times J'$ to $I \times J$. Furthermore let $\tilde h: I' \times J' \to (0,\infty)$ be chosen such that
for any $g \in \C_b(\R)$ we have that $(g \circ \tilde s) \cdot \tilde h, (g \circ \tilde t) \cdot \tilde h$ and $(g \circ \tilde s)\cdot(\tilde t - \tilde s)\cdot \tilde h$
are competitorblind. Then $\tilde s$ is constant in $y$ and $\tilde t$ is constant in $x$.
\end{proposition}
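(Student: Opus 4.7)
My plan is to first pin down the form of $\tilde h$, then show by contradiction that $\tilde s$ is independent of $y$, and finally exploit bijectivity of $\tilde T$ together with a symmetric argument to conclude that $\tilde t$ is independent of $x$. Setting $g \equiv 1$ in the hypothesis that $(g \circ \tilde s) \tilde h$ is competitorblind shows that $\tilde h$ itself is competitorblind, and Lemma~\ref{lem:compchar} then gives the decomposition $\tilde h(x,y) = \phi(x) + \psi(y) + k(x) y$, with $\tilde h > 0$ throughout.

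For the constancy of $\tilde s$ in $y$: suppose on the contrary that $\tilde s(x_0, y_1) \neq \tilde s(x_0, y_2)$ for some $x_0 \in I'$ and $y_1 \neq y_2 \in J'$. Fix $\lambda \in (0,1)$, set $y_\lambda := \lambda y_1 + (1-\lambda) y_2$ and apply the three-point competitors from the proof of Lemma~\ref{lem:compchar} to $F_g := (g \circ \tilde s) \tilde h$. For every $x_1 \in I'$ and every $g \in C_b(\R)$ this produces the identity
\[
\lambda F_g(x_0,y_1) + (1-\lambda) F_g(x_0,y_2) - F_g(x_0,y_\lambda) = \lambda F_g(x_1,y_1) + (1-\lambda) F_g(x_1,y_2) - F_g(x_1,y_\lambda).
\]
Writing $z_{ij} := \tilde s(x_i,y_j)$ for $i \in \{0,1\}$ and $j \in \{1,2,\lambda\}$, if the six points $z_{ij}$ were pairwise distinct, Urysohn's lemma would let us pick $g$ taking value $1$ at exactly one of them and $0$ at the other five; substituting into the identity would then force $\tilde h(x_i,y_j) = 0$ at that lone point, contradicting positivity of $\tilde h$. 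Hence non-trivial coincidences among the $z_{ij}$ must occur. Varying $x_1$ over $I'$, varying $\lambda$ (equivalently $y_2$), and combining with the analogous identities obtained from $(g \circ \tilde t)\tilde h$ and from $(g \circ \tilde s)(\tilde t - \tilde s)\tilde h$---the latter providing the crucial coupling between $\tilde s$ and $\tilde t$ via bijectivity of $\tilde T$---a case analysis rules out every coincidence pattern consistent with $z_{01} \neq z_{02}$, delivering the contradiction.

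Once $\tilde s(x,y) = s(x)$ depends only on $x$, Lemma~\ref{lem:hrestrictions}(i) applies (with $s$ non-constant by bijectivity of $\tilde T$) and forces $\psi$ to be affine, so $\tilde h(x,y) = \Phi(x) + K(x)y$. Bijectivity of $\tilde T$ together with $\tilde s(x,y) = s(x)$ implies that $y \mapsto \tilde t(x,y)$ is injective for each fixed $x$, which is precisely the input needed to apply a slicewise version of Lemma~\ref{lem:hrestrictions}(ii) using the competitorblindness of $(g \circ \tilde t)\tilde h$; it then follows that $\Phi, K$ are constant and $\tilde t(x,y) = t(y)$ depends only on $y$. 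Alternatively, a direct Urysohn argument on horizontal slices mirroring Step~2 yields the same conclusion.

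The main obstacle is the combinatorial case analysis in the second paragraph. The Urysohn observation disposes of the generic case of six distinct points, but a catalogue of coincidence patterns remains (coincidences along the same $x$-slice, along the same $y$-slice, and across both). Ruling these out seems to require the third competitorblindness hypothesis, on $(g\circ \tilde s)(\tilde t - \tilde s)\tilde h$---which has not been used elsewhere and appears to be the technical ingredient that couples the two coordinates---together with careful bookkeeping over the free parameters $x_1$, $\lambda$ and $y_2$.
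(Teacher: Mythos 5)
There are two genuine gaps here. First, your argument that $\tilde s$ is constant in $y$ is left as an unfinished ``catalogue of coincidence patterns,'' and you are missing the structural step that makes the contradiction actually close. The paper first shows that if $\tilde s$ depends on $y$ somewhere, it must also depend on $x$: Lemma \ref{lem:compBlindInjective} (non-injectivity of $\tilde t$ on $I'\times(y_1,y_2)$) together with bijectivity of $\tilde T$ rules out $\tilde s$ being constant on that strip, and then Lemma \ref{lem:hrestrictions}(ii), applied once to $(g\circ\tilde s)\tilde h$ and once to $(g\circ\tilde s)\tilde t\,\tilde h$ (extracted from the third competitorblindness hypothesis), shows that $\tilde s$ independent of $x$ would force all of $\tilde T$ to be independent of $x$, contradicting bijectivity. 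Only after securing points with $\tilde s(x_1,y_1)\neq\tilde s(x_1,y_2)$ \emph{and} $\tilde s(x_1,y_\lambda)\neq\tilde s(x_2,y_\lambda)$ does the freedom in choosing $g$ produce the contradiction. Without that step, your Urysohn argument has no way to dispose of the coincidence pattern in which $\tilde s$ is constant in $x$ (so $z_{0j}=z_{1j}$ for every $j$): the three-point identity is then satisfied identically for every $g$, and this is precisely the degenerate case your case analysis would have to confront but cannot kill by a choice of $g$ alone.

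Second, and more seriously, your deduction that $\tilde t$ is constant in $x$ does not go through. Lemma \ref{lem:hrestrictions}(ii) takes as input a map $\tilde t\colon J'\to J$ of $y$ alone and its conclusion concerns $\tilde h$ (constancy in $x$); it asserts nothing about $\tilde t$ itself, and a ``slicewise version'' applied for each fixed $x$ cannot compare different $x$-slices of $\tilde t$. The paper's route is: use the third hypothesis together with Lemma \ref{lem:hrestrictions}(i) to show that $\tilde t\,\tilde h$ is affine in $y$, hence $\tilde t(x,y)=(a(x)+b(x)y)/(c(x)+d(x)y)$ with $\tilde h(x,y)=c(x)+d(x)y$; then approximate bounded continuous $g$ to get that $\tilde t^k\tilde h$ is competitorblind for $k=2,3$; and finally invoke Lemma \ref{lem:tdependence}, whose appendix proof compares polynomial coefficients in $y$, to conclude constancy in $x$ and the two admissible forms of $(\tilde t,\tilde h)$. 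This M\"obius-representation step is the computational heart of the proposition and is entirely absent from your proposal.
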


\begin{proof}
First assume that $\tilde s$ is not constant in $y$. Accordingly we fix some $x_1 \in I'$ and $y_1 < y_2 \in J'$ such that $\tilde s(x_1,y_1) \neq \tilde s(x_1,y_2)$.

Recall that $\tilde t$ cannot be injective on $I' \times (y_1,y_2)$ by Lemma \ref{lem:compBlindInjective}.
Therefore $\tilde s$ cannot be constant on this area, because that would contradict the bijectivity
of $\tilde T$.

We now want to show that $\tilde s$ has to depend on $x$.
Assume for contradiction that $\tilde s$ does not depend on $x$. By the above paragraph it then cannot be constant in $y$ as well  on $I' \times (y_1,y_2)$.
This puts us in the setting of Lemma \ref{lem:hrestrictions} (ii) which implies that
$\tilde h$ has to be constant in $x$ on this interval. With slight abuse of notation we will therefore write $\tilde s(y) = \tilde s(x,y)$ and $\tilde h(y) = \tilde h(x,y)$
for $(x,y) \in I' \times (y_1,y_2)$. By assumption we have that $g(\tilde s(y))(\tilde t(x,y) - \tilde s(y))\tilde h(y)$ is competitorblind and
in particular $g(\tilde s(y))\tilde h(y)\tilde t(x,y)$ is also competitorblind on $I' \times (y_1,y_2)$. Another application of Lemma
\ref{lem:hrestrictions} (ii) yields that $\tilde h(y)\tilde t(x,y)$ does not depend on $x$. In particular $\tilde t$ does not depend on $x$ and therefore $\tilde T$ does not
depend on $x$ and cannot be bijective.

As we now know that $\tilde s$ cannot be constant in $x$, we can find $y_\lambda \in (y_1,y_2)$ and 
$x_2 \in I'$ such that $\tilde s(x_1,y_\lambda) \neq \tilde s(x_2,y_\lambda)$.

We consider again the usual pair of competitors
\begin{align*} 
\alpha &:= \lambda \delta_{(x_1,y_1)} + (1-\lambda) \delta_{(x_1,y_2)} + \delta_{(x_2,y_\lambda)} \\
\beta &:= \lambda \delta_{(x_2,y_1)} + (1-\lambda) \delta_{(x_2,y_2)} + \delta_{(x_1,y_\lambda)}.
\end{align*}
By assumption we have that for arbitrary $g \in \C_b(\R)$ the equality
\begin{align*}
\lambda &g(\tilde s(x_1,y_1))\tilde h(x_1,y_1) + (1-\lambda)g(\tilde s(x_1,y_2))\tilde h(x_1,y_2) - g(\tilde s(x_1,y_\lambda))\tilde h(x_1,y_\lambda) \\
&= \lambda g(\tilde s(x_2,y_1))\tilde h(x_2,y_1) + (1-\lambda)g(\tilde s(x_2,y_2))\tilde h(x_2,y_2) - g(\tilde s(x_2,y_\lambda))\tilde h(x_2,y_\lambda)
\end{align*}
holds. As in Lemma \ref{lem:compBlindInjective}, there is enough 
freedom in the choice of $g$ that this leads to a contradiction. 
Hence, $\tilde s$ does not depend on $y$ and we can write $\tilde s(x,y)=\tilde s(x)$.

Now Lemma \ref{lem:hrestrictions} (i) implies
that $\tilde h$ is affine in $y$. This yields that $g(\tilde s(x))\tilde s(x)\tilde h(x,y)$ is competitorblind for any continuous bounded $g$. As we assumed that
 $g(\tilde s(x))(\tilde t(x,y) - \tilde s(x))\tilde h(x,y)$ is competitorblind we also obtain that $g(\tilde s(x)) \tilde t(x,y) \tilde h(x,y)$ has to be competitorblind. 
This implies that $\tilde t(x,y)\tilde h(x,y)$ is affine in $y$ using Lemma \ref{lem:hrestrictions} (i) again. 

Therefore we can write $\tilde t(x,y) = \frac{a(x) + b(x)y}{c(x)+d(x)y}$
for some functions $a,b,c$ and $d$, where $\tilde h(x,y) = c(x)+d(x)y$ and therefore $c(x)+d(x)y > 0$ everywhere.
By assumption we have that $g(\tilde t(x,y))\tilde h(x,y)$ is competitorblind for bounded continuous $g$ and by approximation then also that
$\tilde t(x,y)^k\tilde h(x,y)$ is competitorblind. We must also have that for all $x$, the function $y \mapsto \tilde t(x,y)$ is
non-constant, because otherwise $\tilde T(x,y) = (\tilde s(x),\tilde t(x,y))$ is not injective. Therefore $\tilde t$ and $\tilde h$ satisfy the
assumptions of Lemma \ref{lem:tdependence} which yields that $\tilde t$ is constant in $x$.
\end{proof}

\begin{proof}[Proof of Theorem \ref{thm:compensatablerep}]
Let $\alpha',\beta'$ be competitors concentrated on $I' \times J'$. 
We need that $\alpha = \tau^{-1}(\alpha')$ and $\beta = \tau^{-1}(\beta')$ for $\tau$ as defined above are also competitors.
By the definition of $\tau$ we have for a function $f:I \times J$ that 
$\int f(x,y) d\alpha = \int \frac{f(T^{-1}(x',y'))}{h(T^{-1}(x',y'))} d\alpha'$. Set $\tilde h(x',y') := \frac{1}{h(T^{-1}(x',y'))}$
and $T^{-1}(x',y') = (\tilde s(x',y'),\tilde t(x',y'))$.
For $\alpha$ and $\beta$ to be competitors, we need that conditions $(C1)-(C3)$ hold (introduced above Lemma \ref{lem:hrestrictions}). This gives
\begin{align*}
\int g(x) d\alpha &= \int g(\tilde s(x',y'))\tilde h(x',y') d\alpha' \\&= \int g(\tilde s(x',y'))\tilde h(x',y') d\beta' = \int g(x) d\beta
\end{align*}
and similarly for $(C2)$ and $(C3)$. In particular we need that
$(g \circ \tilde s)\tilde h$, $(g \circ \tilde t)\tilde h$ and $(g \circ \tilde s)(\tilde t - \tilde s)\tilde h$ are competitorblind functions. From Proposition
\ref{prop:invcomprep} we then get that $\tilde s$ only depends on $x$, and $\tilde t$ and $\tilde h$ only depend on $y$ with
the representations given by Lemma \ref{lem:tdependence}. Now, $s$ and $t$ are the inverse functions of $\tilde s$ and $\tilde t$ and $h(x,y) = 1/\tilde h(T(x,y))$.

In the case where we have that $\tilde h$ is constant and $\tilde t$ is affine, 
then we also have that $h$ is constant and $t$ is affine.

In the case where we have that $\tilde h$ is linear and $\tilde t$ is affine in $1/y'$ we now get that
$t$ is of the form $t(y) = a/(y - b)$ and  $h(y) = c(y-b)$.
\end{proof}

\subsection{Pushing Monotonicity Along Transformations}

We are interested in competitor preserving transformations because they allow us to identify
monotone sets for new cost functions derived from such transformations in the following way:

\begin{proposition}\label{prop:monpush}
Let $(T,h)$ be a  monotonicity preserving transformation  and let $c$ be a cost function. 
Define a new cost function $c'(x',y') := \frac{c(T^{-1}(x',y'))}{h(T^{-1}(x',y'))}$. If $\Xi$ is $c$-monotone, then $\Xi' := T(\Xi)$ is $c'$-monotone.
\end{proposition}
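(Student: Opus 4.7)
The plan is to verify the $c'$-monotonicity of $\Xi'=T(\Xi)$ directly from Definition~\ref{def:monotone} by pulling test configurations back through $\tau^{-1}$. Concretely, I would fix an arbitrary finite measure $\alpha'$ with finite support contained in $\Xi'$ and an arbitrary competitor $\beta'$, and set $\alpha:=\tau^{-1}(\alpha')$, $\beta:=\tau^{-1}(\beta')$. By the monotonicity preserving property of $(T,h)$, the pair $(\alpha,\beta)$ is again a competitor pair on $I\times J$, so one may compare their $c$-integrals using the hypothesis that $\Xi$ is $c$-monotone.

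Next I would check that $\alpha$ is supported in $\Xi$ with finite support. Since $T$ is bijective, the defining identity $\int g\,d\tau(\alpha)=\int(g\circ T)\cdot h\,d\alpha$ forces $\alpha$ and $\tau(\alpha)$ to have supports in bijective correspondence through $T$: if $\alpha'=\sum_i a_i\delta_{(x'_i,y'_i)}$, then necessarily $\alpha=\sum_i \tfrac{a_i}{h(T^{-1}(x'_i,y'_i))}\delta_{T^{-1}(x'_i,y'_i)}$, which has finite support contained in $T^{-1}(\Xi')=\Xi$. Hence $\alpha$ is an admissible test measure for the $c$-monotonicity of $\Xi$, and we obtain $\int c\,d\alpha\le\int c\,d\beta$.

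The remaining step is the key change-of-variables computation, which is essentially tautological once the definitions are unravelled: using $\tau(\alpha)=\alpha'$ and $c'\circ T=c/h$,
\[
\int c'\,d\alpha' \;=\; \int (c'\circ T)\cdot h\,d\alpha \;=\; \int \frac{c}{h}\cdot h\,d\alpha \;=\; \int c\,d\alpha,
\]
and the same identity holds with $\beta',\beta$ in place of $\alpha',\alpha$. Combining with the previous inequality yields $\int c'\,d\alpha'\le\int c'\,d\beta'$, which is exactly $c'$-monotonicity of $\Xi'$.

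I do not anticipate a real obstacle: the only slightly delicate point is ensuring that the transport through $\tau^{-1}$ actually sends $\Xi'$-supported measures to $\Xi$-supported measures and preserves finiteness of support, which is immediate from the bijectivity of $T$ and the strict positivity of $h$. All the work has been done in establishing that $(T,h)$ is competitor preserving; here we are merely cashing in that property together with the pointwise identity $c=(c'\circ T)\cdot h$.
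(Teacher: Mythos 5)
Your proposal is correct and follows essentially the same route as the paper: pull $\alpha'$ and its competitor $\beta'$ back through $\tau^{-1}$, invoke the competitor-preserving property, apply $c$-monotonicity of $\Xi$, and translate back via the identity $\int c'\,d\alpha' = \int c\,d\alpha$. You are in fact slightly more careful than the paper, which does not spell out the finite-support verification or the change-of-variables computation (and which contains a sign typo, writing $\geq$ where Definition~\ref{def:monotone} requires $\leq$).
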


\begin{proof}
Let $\alpha'$ be a finite measure concentrated on $\Xi'$ and $\beta'$ be a competitor of $\alpha'$.
Then $\alpha := \tau^{-1}(\alpha')$ and $\beta := \tau^{-1}(\beta')$ are competitors such that
$\alpha$ is concentrated on $\Xi$. Therefore we have that $\alpha'(c') = \alpha(c) \geq \beta(c) = \beta'(c')$.
As $\alpha'$ and $\beta'$ were arbitrary, we have that $\Xi'$ is a $c'$-monotone set.
\end{proof}

\begin{example}
Let $c$ be a Spence--Mirrlees cost function (i.e. $c_{xyy} < 0$). We know that a set $\Xi$ is $c$-monotone if and only if
it is left monotone. We consider the simple transformation $T(x,y) = (-x,y)$ and $h(y) = 1$. Then $(T,h)$ is monotonicity preserving by Theorem \ref{thm:montrafo} with respect to the transformed cost function $c'(x',y') = c(-x',y')$. Observe that we have $c'_{x'y'y'} > 0$.
The set $\Xi' := T(\Xi) = \{(x',y'): (-x',y') \in \Xi\}$ is $c'$-monotone by   Proposition \ref{prop:monpush}. Indeed we can easily
check that $\Xi'$ is right monotone: Suppose we have $(x_1,y_1),(x_1,y_2),(x_2,y') \in \Xi'$ with $x_2 < x_1$ and $y_1 < y' < y_2$,
then we have $(-x_1,y_1),(-x_1,y_2),(-x_2,y') \in \Xi$ with $-x_1 < -x_2$ which contradicts the left-monotonicity of $\Xi$.
\end{example}

\begin{remark}
The example above holds true replacing $T$ by $\tilde T(x,y)=(s(x),y)$ for some differentiable function $s$ such that $s'(x) < 0$ on the support of $\mu$.
\end{remark}

\begin{example}
Let $c(x,y) = -|x-y|$ and again set $T(x,y) = (-x,y)$ and $h(y) = 1$. Then $c'(x,y) = -|x+y|$. If $\mu$ is continuous and
$\Xi$ is a monotone set for $c$, then it is concentrated on two graphs of increasing functions as established above. Then
$T(\Xi)$ is concentrated on two graphs of decreasing functions which corresponds to the shape of monotone sets for this cost
function established in \cite{St14}.
\end{example}

\begin{example}
The transformation described in \cite{CaLaMa16} is given by $T(x,y) = (1/x,1/y)$ with $h(y) = y$. For the cost
function $c(x,y) = -|x-y|$ this yields the transformed cost $c'(x,y) = -|1/x - 1/y|/(1/y) = -|y/x - 1|$ (for $x,y > 0$).
Furthermore the graph of an increasing function is transformed into the graph of an increasing function under this map $T$,
which shows that the optimizer of $-|y/x - 1|$ is also concentrated on two graphs of increasing functions. Similarly
one can deal with $c(x,y) = |x-y|$ and cost functions of Spence--Mirrlees type.
\end{example}

\subsection{Transformations of Martingales}

The given transformations only transform monotone  sets into monotone sets for a modified
cost function. They do not necessarily transform optimal martingale transports into optimal martingale transports.
It is possible to have a monotone set that is not even capable of supporting a martingale, e.g.\ $\Xi=\{(0,1)\}$. Nevertheless if
a martingale is concentrated on a $c$-monotone set (for sufficiently nice $c$), it is optimal. For our transformation to preserve martingales we would have to ask for a more stringent
condition. Namely we would need
$\int g(s(x))(t(y) - s(x))h(y) d\pi(x,y) = 0$ to hold for arbitrary martingales and bounded continuous
functions $g$. This can only hold if $(t(y) - s(x))h(y) = r(x)(y-x)$ which by a simple analysis shows that this
is only possible for concatenations of transformations of the form $T(x,y) = (ax+b,ay+b), h(y)=c$ or $T(x,y) = (1/x,1/y)$ and $h(y) = cy$ which proves necessity in Theorem \ref{thm:change of numeraire}. It is trivial to see that transformations of the first form preserve martingales, while it was shown in \cite{CaLaMa16} that transformations of the second form also
preserve martingales.

\subsection{The Skorokhod Picture of Competitor-Preserving Transformations}

In this section we show that there are clear geometric interpretations of monotonicity preserving transformations which can be well understood using the relation of MOT and SEP.

Let us consider a cost function $c$ satisfying the generalized Spence--Mirrlees condition $c_{xyy}<0$. By Proposition \ref{prop:SpMi} we know that the corresponding SEP solution is the hitting time of a right barrier (see Remark \ref{rem:leftmonotone}) in the phase space $(B_t-B_0,B_t)$. The map $T(x,y)=(-x,-y)$ the point reflection at $(0,0)$ transforms the right barrier into a left barrier, similarly for the transformation $T(x,y)=(1/x,1/y).$ This barrier corresponds to solutions for cost functions $c'$ satisfying $c'_{xyy}>0$ or equivalently to the maximization problem with respect to the cost function $c$ with $c_{xyy}<0$ (see Remark \ref{rem:leftbarrier}).

Considering the transformations $T(x,y)=(-x,y)$, the reflection at the $y$-axis, again a right barrier will be mapped into a left barrier in the $(B_t-B_0,B_t)$ phase space. The transformed cost function satisfies $c'(x,y)=c(-x,y)$. Specifying $c(x,y)=|y-x|$ we get $c'(x,y)=|y+x|$. Moreover, writing the corresponding SEP solution in the $(B_t+B_0,B_t)$ phase space we end up with a right barrier. To be more specific, if  the original optimal
stopping time is $\tau = \inf \{ t\geq 0: B_t - B_0 \geq \psi(B_t)\}$ then the modified stopping
rule is given by $\tau' = \inf \{ t \geq 0: B_t + B_0 \geq \psi(B_t)\}$ which can be rewritten to
$\tau' = \inf \{t \geq 0: B_t - B_0 \leq 2B_t - \psi(B_t)\}$, showing that it is equivalent to a left barrier in the $(B_t-B_0,B_t)$ phase space  revealing a surprising symmetry between solutions for the cost functions  $\gamma(f,s)=|f(s)-f(0)|$ and $\gamma'(f,s)=|f(s)+f(0)|$. 
\begin{figure}
\begin{tabular}{{m{2.5in}m{0.5cm}m{2in}}}
\begin{tikzpicture}[scale=0.3]
\draw[->] (-8,0) -- (5,0) node[right] {$B_t-B_0$};
\draw[->,thick] (0,-6) -- (0,6) node[above] {$B_t$};

\draw [smooth,variable=\y,domain=-5:5.5] plot ( {4.5+0.02*\y-0.2*\y*\y+0.02*\y*\y*\y+0.02*\y*\y*\y*\y+0.001*\y*\y*\y*\y*\y-0.001*\y*\y*\y*\y*\y*\y},{\y});

\fill [pattern = horizontal lines] (5,-5) -- plot [smooth,variable=\y,domain=-5:5.5] ( {4.6+0.02*\y-0.2*\y*\y+0.02*\y*\y*\y+0.02*\y*\y*\y*\y+0.001*\y*\y*\y*\y*\y-0.001*\y*\y*\y*\y*\y*\y},{\y}) -- (5,5.5) -- cycle;

\draw[<->,color=blue,thick] (-3.5,-4.5) -- (4.7,3.7); %
\draw[<->,color=orange,thick] (-5.7,-4.7) -- (3.65,4.65); %
\draw[<->,color=green,thick] (-7.9,-4.9) -- (2,5); %
\end{tikzpicture}
& $\rightarrow$ & 
\begin{tikzpicture}[scale=0.3]
\draw[->] (-8,0) -- (5,0) node[right] {$B_t+B_0$};
\draw[->] (0,-6) -- (0,6) node[above] {$B_t$};

\draw[-,thick] (-10,-5) -- (7,3.5);
\draw [smooth,variable=\y,domain=-5:5.5] plot ( {4.5+0.02*\y-0.2*\y*\y+0.02*\y*\y*\y+0.02*\y*\y*\y*\y+0.001*\y*\y*\y*\y*\y-0.001*\y*\y*\y*\y*\y*\y},{\y});
\fill [pattern = horizontal lines] (5,-5) -- plot [smooth,variable=\y,domain=-5:5.5] ( {4.6+0.02*\y-0.2*\y*\y+0.02*\y*\y*\y+0.02*\y*\y*\y*\y+0.001*\y*\y*\y*\y*\y-0.001*\y*\y*\y*\y*\y*\y},{\y}) -- (5,5.5) -- cycle;

\draw[<->,color=blue,thick] (-5.7,-4.7) -- (3.65,4.65); %
\draw[<->,color=orange,thick] (-3.5,-4.5) -- (4.7,3.7); %
\draw[color=green,fill] (6,3) circle (3pt); %
\end{tikzpicture}
\end{tabular}

\begin{center}
\begin{tikzpicture}[scale=0.3]
\draw[->] (-8,0) -- (5,0) node[right] {$B_t-B_0$};
\draw[->,thick] (0,-6) -- (0,6) node[above] {$B_t$};

\draw [smooth,variable=\y,domain=-5.03:5] plot ( {-4.5+1.98*\y+0.2*\y*\y-0.02*\y*\y*\y-0.02*\y*\y*\y*\y-0.001*\y*\y*\y*\y*\y+0.001*\y*\y*\y*\y*\y*\y},{\y});

\fill [pattern = horizontal lines] (-9,-5.03) -- plot [smooth,variable=\y,domain=-5.03:5] ( {-4.6+1.98*\y+0.2*\y*\y-0.02*\y*\y*\y-0.02*\y*\y*\y*\y-0.001*\y*\y*\y*\y*\y+0.001*\y*\y*\y*\y*\y*\y},{\y}) -- (-9,5) -- cycle;

\draw[<->,color=blue,thick] (-3.7,-4.7) -- (5.65,4.65); %
\draw[<->,color=orange,thick] (-5.5,-4.5) -- (2.75,3.75); %
\draw[color=green,fill] (0,3) circle (3pt); %
\end{tikzpicture}
\end{center} \caption{The transformation of monotone sets for the Spence--Mirrlees cost functions.}
\label{fig:mirrleestransform}
\end{figure}
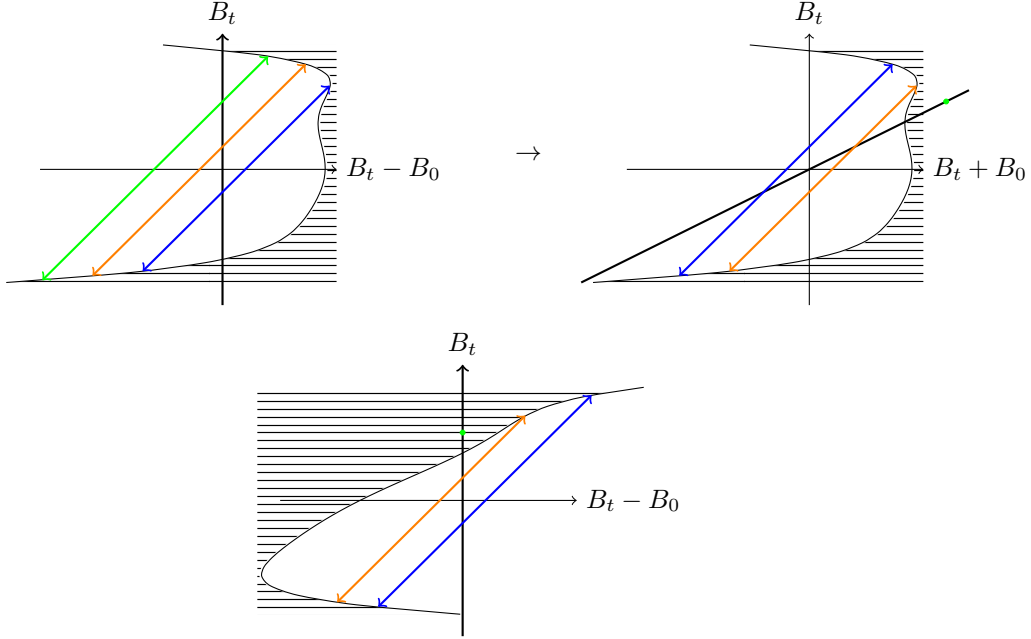
These transformations are depicted in Figure \ref{fig:mirrleestransform}.

Similarly for the cost functional $-|B_0 - B_t|$ we get that the optimal stopping time is given by two barriers
in the $(B_t-B_0,B_t)$ phase space. As before the transformation $T(x,y) = (-x,y)$ transforms these barriers into
two barriers in the $(B_t+B_0,B_t)$ phase space. 
\begin{figure}
\begin{tabular}{m{2in}m{0.5cm}m{2in}}
\begin{tikzpicture}[scale=0.3]
\draw[->] (-6,0) -- (6,0) node[right] {\small $B_t-B_0$};
\draw[->,thick] (0,-6) -- (0,6) node[above] {\small $B_t$};

\draw [smooth,variable=\y,domain=-5:5.5] plot ( {3.5+0.01*\y-0.05*\y*\y+0.0025*\y*\y*\y+0.00125*\y*\y*\y*\y+0.00003125*\y*\y*\y*\y*\y-0.000015625*\y*\y*\y*\y*\y*\y},{\y});
\draw [smooth,variable=\y,domain=-5:5.5] plot ( {-2.8+0.01*\y-0.05*\y*\y+0.0025*\y*\y*\y+0.00125*\y*\y*\y*\y+0.00003125*\y*\y*\y*\y*\y-0.000015625*\y*\y*\y*\y*\y*\y},{\y});

\fill [pattern = horizontal lines] (5,-5) -- plot [smooth,variable=\y,domain=-5:5.5] ( {3.6+0.01*\y-0.05*\y*\y+0.0025*\y*\y*\y+0.00125*\y*\y*\y*\y+0.00003125*\y*\y*\y*\y*\y-0.000015625*\y*\y*\y*\y*\y*\y},{\y}) -- (5,5.5) -- cycle;

\fill [pattern = horizontal lines] (-5,-5) -- plot [smooth,variable=\y,domain=-5:5.5] ( {-2.9+0.01*\y-0.05*\y*\y+0.0025*\y*\y*\y+0.00125*\y*\y*\y*\y+0.00003125*\y*\y*\y*\y*\y-0.000015625*\y*\y*\y*\y*\y*\y},{\y}) -- (-5,5.5) -- cycle;

\draw[<->,color=blue,thick] (-3.9,-4.9) -- (3.3,2.3); %
\draw[<->,color=orange,thick] (-3.1,-2.1) -- (3.2,4.2); %
\draw[<-,color=green,thick] (-2.8,0.2) -- (2.5,5.5); %
\end{tikzpicture}
& $\rightarrow$ &
\begin{tikzpicture}[scale=0.3]
\draw[->] (-8,0) -- (5,0) node[right] {\small $B_t+B_0$};
\draw[->] (0,-6) -- (0,6) node[above] {\small $B_t$};

\draw[-,thick] (-6,-3) -- (7,3.5);
\draw [smooth,variable=\y,domain=-5:5.5] plot ( {3.5+0.01*\y-0.05*\y*\y+0.0025*\y*\y*\y+0.00125*\y*\y*\y*\y+0.00003125*\y*\y*\y*\y*\y-0.000015625*\y*\y*\y*\y*\y*\y},{\y});
\draw [smooth,variable=\y,domain=-5:5.5] plot ( {-2.8+0.01*\y-0.05*\y*\y+0.0025*\y*\y*\y+0.00125*\y*\y*\y*\y+0.00003125*\y*\y*\y*\y*\y-0.000015625*\y*\y*\y*\y*\y*\y},{\y});

\fill [pattern = horizontal lines] (5,-5) -- plot [smooth,variable=\y,domain=-5:5.5] ( {3.6+0.01*\y-0.05*\y*\y+0.0025*\y*\y*\y+0.00125*\y*\y*\y*\y+0.00003125*\y*\y*\y*\y*\y-0.000015625*\y*\y*\y*\y*\y*\y},{\y}) -- (5,5.5) -- cycle;

\fill [pattern = horizontal lines] (-5,-5) -- plot [smooth,variable=\y,domain=-5:5.5] ( {-2.9+0.01*\y-0.05*\y*\y+0.0025*\y*\y*\y+0.00125*\y*\y*\y*\y+0.00003125*\y*\y*\y*\y*\y-0.000015625*\y*\y*\y*\y*\y*\y},{\y}) -- (-5,5.5) -- cycle;

\draw[<->,color=blue,thick] (-3.1,-2.1) -- (3.2,4.2); %
\draw[<->,color=orange,thick] (-3.9,-4.9) -- (3.3,2.3); %
\draw[color=green,fill] (6,3) circle (3pt); %
\end{tikzpicture}
\end{tabular}

\begin{tikzpicture}[scale=0.3]
\draw[->] (-8,0) -- (5,0) node[right] {\small $B_0$};
\draw[->] (0,-6) -- (0,6) node[above] {\small $B_t$};
\draw[-,thick] (-5,-5) -- (5,5);

\draw [smooth,variable=\y,domain=-1:5.5] plot ( {3.5-0.99*\y-0.05*\y*\y+0.0025*\y*\y*\y+0.00125*\y*\y*\y*\y+0.00003125*\y*\y*\y*\y*\y-0.000015625*\y*\y*\y*\y*\y*\y},{\y});
\draw [smooth,variable=\y,domain=-5:1] plot ( {-2.8-0.99*\y-0.05*\y*\y+0.0025*\y*\y*\y+0.00125*\y*\y*\y*\y+0.00003125*\y*\y*\y*\y*\y-0.000015625*\y*\y*\y*\y*\y*\y},{\y});

\fill [pattern = horizontal lines] (4.53,5.5) -- plot [smooth,variable=\y,domain=-1:5.5] ( {3.6-0.99*\y-0.05*\y*\y+0.0025*\y*\y*\y+0.00125*\y*\y*\y*\y+0.00003125*\y*\y*\y*\y*\y-0.000015625*\y*\y*\y*\y*\y*\y},{\y}) -- cycle;

\fill [pattern = horizontal lines] (-3.94,-5) -- plot [smooth,variable=\y,domain=-5:1] ( {-2.9-0.99*\y-0.05*\y*\y+0.0025*\y*\y*\y+0.00125*\y*\y*\y*\y+0.00003125*\y*\y*\y*\y*\y-0.000015625*\y*\y*\y*\y*\y*\y},{\y}) -- cycle;

\draw[<->,color=blue,thick] (-1,-2) -- (-1,4.2); %
\draw[<->,color=orange,thick] (1,-4.9) -- (1,2.3); %
\draw[color=green,fill] (3,3) circle (3pt); %
\end{tikzpicture}

 \caption{The transformation of monotone sets for the cost function $c(x,y) = -|x-y|$.}
\label{fig:neubergertransform}
\end{figure}
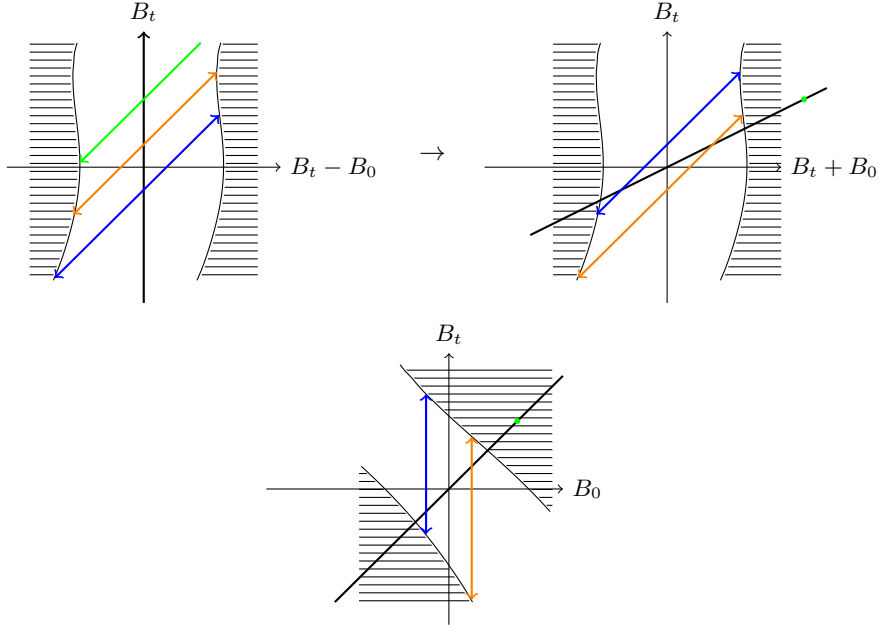
This is depicted in Figure \ref{fig:neubergertransform}.

We  emphasize that this form of transformations does not lead to the optimal stopping time for
the same marginals $\mu,\nu$ that the original barrier was constructed for. If we start an arbitrary
distribution $\mu$ and stop it at this barrier, it will be the optimal stopping time between its marginals.

\section{Towards a 'full' Monotonicity Principle for SEP}\label{sec:conj}

Some transformations discussed in Section \ref{sec:trafo} have clear analogues for \eqref{eq:OptSEP}, e.g.\ $(x,y)\mapsto (1/x,1/y)$ corresponds to $(f,s)\mapsto (1/f,s)$ or $(x,y)\mapsto (ax+b,ay+b)$ corresponds $(f,s)\mapsto (af+b,s)$, whereas the analogues of other transformations might be less obvious. Nevertheless, the 'good' transformations reveal symmetries between solutions to \eqref{eq:OptSEP} for different cost functions $\gamma$. To establish an analogue to Proposition \ref{prop:monpush} for \eqref{eq:OptSEP} it seems to be necessary to find and prove a full monotonicity principle. Comparing with MOT we now give a conjecture on this full monotonicity principle. To this end, we need the following definition.

\begin{definition}
 Let $\alpha$ and $\beta$ be two finite measures on $S$. Put $h^\mathsf{initial}(f,s)=f(0)$ and $h^\mathsf{final}(f,s)=f(s)$. We say that $\beta$ is a competitor of $\alpha$ iff 
\begin{align}
 (h^\mathsf{initial})_*\alpha &= (h^\mathsf{initial})_*\beta  \label{eq:SEPfirstmarginal} \\
 (h^\mathsf{final})_*\alpha &= (h^\mathsf{final})_*\beta\ ,\label{eq:SEPsecmarginal}
\end{align}
 and for all martingales $\phi$ such that there is a continuous function $H:S\to \R$ such that $\phi(\omega,t)=H(\omega_{\llcorner [0,t]},t)$
\begin{align}
 \int \phi\ d\alpha:= \int H\ d\alpha = \int H\ d\beta =: \int \phi\ d\beta\ . \label{eq:SEPdisintegration}
\end{align}

\end{definition}
Conditions \eqref{eq:SEPfirstmarginal} and \eqref{eq:SEPsecmarginal} are the obvious analogues of the equal marginal constraint in Definition \ref{def:monotone}. The condition \eqref{eq:SEPdisintegration} should be compared to \eqref{eq:compdef}. Since a full monotonicity principle for \eqref{eq:OptSEP} needs to be able to make pathwise comparisons we only require \eqref{eq:SEPdisintegration} for functions which are well defined on a pointwise level. Examples for competitors can be read off from Definition \ref{def:SG}, i.e.\ given a stopping time $\sigma$ with positive and finite expectation we set
\begin{align*}
\alpha&=\delta_{(g,t)} + \int \delta_{(f,s)\oplus ((B_u)_{u\leq\sigma}(\omega),\sigma(\omega))}\P(d\omega)\\
 \beta &=\delta_{(f,s)} + \int \delta_{(g,t)\oplus ((B_u)_{u\leq\sigma}(\omega),\sigma(\omega))}\P(d\omega)\ .
\end{align*}

The following should be compared to  Definition \ref{def:SG}(ii).
\begin{definition}
 We say that a set $\Gamma\subset S$ is \emph{strongly $\gamma$-monotone} if for any finite measure $\alpha$ concentrated on $\Gamma$ such that $(h^\mathsf{initial})_*\alpha$ is concentrated on finitely many points  and any competitor $\beta$ of $\alpha$ it holds that 
$$\int \gamma\ d\alpha \leq \int \gamma\ d\beta.$$
\end{definition}

\begin{conjecture*}
Let $\gamma: S \to \R$ be Borel $\mu, \nu$ be two probabilities on $\R$ increasing in convex order. There exists a strongly $\gamma$-monotone set $\Gamma \subseteq S$ such that a solution $\tau$ of SEP is a solution to \eqref{eq:OptSEP} if and only if $\P[(B_\tau,\tau)\in \Gamma] = 1$.
\end{conjecture*}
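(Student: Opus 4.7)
The plan is to extend the monotonicity principle of \cite{BeCoHu16} (Theorem \ref{GlobalLocal3}) from the pairwise (stop-go) regime to arbitrary finite-measure comparisons, in parallel with the passage from Theorem \ref{thm:varlemma} to Theorem \ref{thm:fullcm} carried out by \cite{BeNuTo16} for MOT. For the `only if' direction, given any optimizer $\hat\tau$, Theorem \ref{GlobalLocal3} supplies a $\gamma$-monotone Borel set $\Gamma_{\hat\tau}$ supporting $((B_s)_{s\leq\hat\tau},\hat\tau)$. To merge these into a single universal $\Gamma$ I would mimic the transfinite / Zorn argument of \cite{BeNuTo16}: order pairs consisting of a candidate Borel set and a family of optimizers supported on it by extension, and extract a maximal element.

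The next step is to promote the resulting $\gamma$-monotonicity to strong $\gamma$-monotonicity. Suppose by contradiction there is a finite measure $\alpha$ concentrated on $\Gamma$ with $(h^{\mathsf{initial}})_*\alpha$ supported on finitely many points $x_1,\ldots,x_n$, together with a competitor $\beta$ such that $\int \gamma\,d\alpha > \int \gamma\,d\beta$. The conditions \eqref{eq:SEPfirstmarginal}--\eqref{eq:SEPsecmarginal} guarantee marginal preservation, while \eqref{eq:SEPdisintegration} is exactly what is needed to preserve the uniformly integrable martingale property when substituting $\alpha$-trajectories by $\beta$-trajectories in a pathwise realization along the paths of the underlying optimizer. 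Provided such a substitution can be built measurably, the resulting embedding would have strictly smaller cost, contradicting optimality. For the `if' direction, take any $\tau'$ embedding $\nu$ with $\mathbb{P}[((B_s)_{s\leq\tau'},\tau')\in \Gamma]=1$ and any known optimizer $\hat\tau$, form the natural competitor pair between the laws of their stopped trajectories, and apply strong $\gamma$-monotonicity to deduce $\mathbb{E}[\gamma_{\tau'}]\leq \mathbb{E}[\gamma_{\hat\tau}]$.

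The main obstacle is the `substitution step' underlying the upgrade from pairwise to strong monotonicity. In MOT the analogous step is essentially algebraic, since any two martingale couplings sharing marginals can be interpolated by local rearrangements preserving the martingale constraint. In the SEP setting the corresponding statement is genuinely delicate because a competitor $\beta$ lives on the path space $S$ and must be realized by a stopping rule adapted to the Brownian filtration; the restriction in \eqref{eq:SEPdisintegration} to pathwise-defined martingales of the form $\phi(\omega,t)=H(\omega_{\llcorner[0,t]},t)$ with $H$ continuous keeps the notion of competitor tight enough to be useful, but converting an abstract competitor into a concrete modification of the underlying stopping time while preserving uniform integrability appears to require a genuinely new continuous-time tool. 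This is precisely the `missing ingredient' flagged by the authors, and any honest attack on the conjecture will succeed or fail on its resolution; the verification in the special case $\gamma(f,s)=\gamma(f(0),f(s))$ is possible only because the MOT layer of \cite{BeNuTo16} already supplies the required algebraic substitution, which then lifts to SEP via the Dambis--Dubins--Schwarz interpolation.
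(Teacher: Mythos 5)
The statement you are addressing is labeled a conjecture precisely because the paper supplies no proof; the authors explicitly note that a necessary ingredient—a `full monotonicity principle' for SEP—is still missing, and they only record that the conjecture holds for functionals of the special form $\gamma(f,s)=c(f(0),f(s))$ by transferring Theorem~\ref{thm:fullcm} through the MOT/SEP correspondence. There is therefore no argument in the paper to compare your attempt against. Your proposal is likewise not a proof but a roadmap, and you are candid about that: you correctly identify the central obstruction as the lack of a continuous-time substitution mechanism turning an abstract competitor on path space into a concrete modification of the stopping rule, and you correctly observe that the special case works only because the MOT layer already supplies the needed substitution.

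That said, the roadmap glosses over several nontrivial points beyond the one you flag. For the `if' direction you propose to compare the laws of the stopped paths under $\tau'$ and under a known optimizer $\hat\tau$ as a competitor pair, but the strong $\gamma$-monotonicity you would invoke only applies to $\alpha$ with $(h^\mathsf{initial})_*\alpha$ concentrated on finitely many atoms, so a localization or approximation step is needed and is not supplied. It is also not automatic that two embeddings of the same pair $(\mu,\nu)$ satisfy \eqref{eq:SEPdisintegration} for every pathwise-continuous martingale $H$, since the definition in the paper does not restrict to uniformly integrable martingales; even identifying the ``natural competitor pair'' therefore requires an argument. Finally, the transfinite merging of the sets $\Gamma_{\hat\tau}$ obtained from Theorem~\ref{GlobalLocal3} is asserted to mimic the MOT construction, but the MOT merging leans on structural facts (compactness of the optimizer set, stability of $c$-monotonicity under countable unions) whose SEP analogues are not established here. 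None of this undermines your diagnosis that the substitution step is the crux; it simply means that even granting a resolution of that step, a complete proof would still have real work to do on the remaining pieces you treat as formal.
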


We only note here that the conjecture holds for functionals $\gamma(f,s)=c(f(0),f(s))$ due to the intimate connection of MOT and SEP and the respective result for MOT, Theorem \ref{thm:fullcm}.

\appendix

\section{Proof of Lemma \ref{lem:tdependence}}

\begin{proof}
We first write $\tilde t(x,y)^2(c(x) + d(x)y) = \phi(x) + \psi(y) + k(x)y$ and multiply both sides by $c(x) + d(x)y$ (which was assumed
to be positive everywhere) to obtain
\[a(x)^2 + 2a(x)b(x)y + b(x)^2y^2 = (c(x) + d(x)y)(\phi(x) + \psi(y) + k(x)y).\]
As  the left side is a polynomial in $y$, the right side must also be a polynomial in $y$ which can only hold
if we can write $\psi(y) = p_{-1} y^{-1} + p_2y^2$ (we can assume that no constant and linear term exists by modifying
$\phi$ and $k$ accordingly). 

Next observe that we cannot have both $p_{-1} \neq 0$ and $p_2 \neq 0$. In this case  we must have $c(x) = d(x) = 0$
for all $x$ so that we do not have a term depending on $y^3$ nor a term that depends on $y^{-1}$ on the right side which
is necessary for this equality to hold. However, this  contradicts the assumption that $c(x) + d(x)y > 0$.

Furthermore, it is also impossible to have $p_{-1} = p_2 = 0$. In this case the above equation simplifies to
\begin{align*}
a(x)^2 &+ 2a(x)b(x)y + b(x)^2y^2 \\
&= c(x)\phi(x) + [c(x)k(x) + d(x)\phi(x)]y + d(x)k(x)y^2
\end{align*}
which by comparison of coefficients gives the conditions
$a(x)^2 = c(x)\phi(x)$, $b(x)^2 = d(x)k(x)$ and $2a(x)b(x) = c(x)k(x) + d(x)\phi(x)$.
Suppose for fixed $x$ that $c(x) = 0$, then we also have $a(x) = 0$ and therefore $\tilde t(x,y) = b(x)/d(x)$ does not
depend on $y$ which contradicts the assumption that $t$ is non-constant in $y$ for this $x$. Similarly
$d(x) = 0$ implies $b(x) = 0$ and therefore $\tilde t(x,y) = a(x)/c(x)$ which again would be constant in $y$.
Therefore we can assume that $c(x)d(x) \neq 0$ and multiply the last condition by this term to obtain
$2a(x)b(x)c(x)d(x) = c(x)^2k(x)d(x) + d(x)^2\phi(x)c(x)$. Now substitute the first two conditions to obtain
$2a(x)b(x)c(x)d(x) = c(x)^2b(x)^2 + d(x)^2a(x)^2$ which implies that $b(x)c(x) = a(x)d(x)$ holds. 
A M\"obius-transform with coefficients whose determinant is $0$ is constant (in $y$) which 
would mean that $t$ is constant in $y$.

Now we consider the cases where exactly one of the $p_i$ is $0$:

If $p_2 = 0$ but $p_{-1} \neq 0$, we must have $c(x) = 0$ for all $x$ because otherwise the right side
of the above equation has a term that depends on $y^{-1}$ which does not occur on the left side. Then the equation
simplifies to
\[a(x)^2 + 2a(x)b(x)y + b(x)^2y^2 = d(x)p_{-1} + \phi(x)d(x)y + d(x)k(x)y^2\]
which by comparing the coefficient of the linear term gives $a(x)^2 = d(x)p_{-1}$.
Substituting $d(x)$ in the definition of $\tilde t(x,y)$ accordingly gives
$\tilde t(x,y) = \frac{p_{-1}}{a(x)}y^{-1} + \frac{p_{-1}b(x)}{a(x)^2}$. Now considering 
$\tilde t(x,y)^3d(x)y = \tilde t(x,y)^3\frac{a(x)^2}{p_{-1}}y$ we have that the coefficient of $y^{-2}$ is given by
$p_{-1}^2/a(x)$ and needs to be constant in $x$ in order for this function to be competitorblind which shows that $a(x)$ is
constant in $x$. Now writing $a(x) = a$ we also have that the coefficient of $y^{-1}$ is given by $3p_{-1}^2b(x)/a^2$ which
also needs to be constant in $x$ and thus $b(x)$ is constant in $x$. In conclusion we have that $\tilde t(x,y)$ is constant in $x$ and affine in $y^{-1}$.

If $p_{-1} = 0$ but $p_2 \neq 0$, we must have $d(x) = 0$ for all $x$ because otherwise the right side
of the above equation has a term that depends on $y^3$ which does not occur on the left side. Then the equation
simplifies to
\[a(x)^2 + 2a(x)b(x)y + b(x)^2y^2 = c(x)\phi(x) + c(x)k(x)y + c(x)p_2y^2\]
for which we can compare the coefficient of the square term to obtain the condition $b(x)^2 = c(x)p_2$. Substituting for
$c(x)$ in the definition of $\tilde t(x,y)$ then gives $\tilde t(x,y) = \frac{p_2a(x)}{b(x)^2}+\frac{p_2}{b(x)}y$. We have that
$\tilde t(x,y)^3c(x) = \tilde t(x,y)^3b(x)^2/p_2$ is competitorblind and thus its coefficient of $y^3$ must be constant in $x$. This 
coefficient is given by $\frac{p_2^2}{b(x)}$ and therefore $b(x) = b$ is constant in $x$. The coefficient of $y^2$ of this
function is given by $3\frac{p_2^2 a(x)}{b^2}$ and must also be constant in $x$ which implies $a(x) = a$ is constant in $x$.
We can conclude as in the first case that $\tilde t(x,y)$ is constant in $x$ and affine in $y$.
\end{proof}

\bibliography{joint_biblio}{}
\bibliographystyle{plain}

\end{document}